\documentclass[11pt]{amsart}

\usepackage[utf8]{inputenc}
\usepackage[T1]{fontenc}
\usepackage[margin=3.7cm]{geometry}        % balanced margins
\usepackage[foot]{amsaddr}
\usepackage{amssymb}
\usepackage{mathtools}
\usepackage{graphicx}
\usepackage[font=small,labelfont=bf,width=\linewidth]{caption}
\usepackage{xcolor}
\usepackage{esint}
\usepackage{subcaption}

\usepackage{algorithm}
\usepackage{algorithmic}

\usepackage[shortlabels]{enumitem}

\numberwithin{equation}{section}

\usepackage[sort&compress,numbers]{natbib}

\theoremstyle{plain}
\newtheorem{theorem}{Theorem}[section]
\newtheorem{proposition}{Proposition}[section]

\theoremstyle{definition}

\theoremstyle{remark}
\newtheorem{remark}{Remark}[section]

\usepackage[colorlinks=true,
            citecolor=blue,
            urlcolor=blue,
            linkcolor=blue]{hyperref}
\usepackage[nameinlink,capitalize]{cleveref}

\crefformat{equation}{(#2#1#3)}
\crefrangeformat{equation}{(#3#1#4) to~(#5#2#6)}
\crefmultiformat{equation}{(#2#1#3)}{ and~(#2#1#3)}{, (#2#1#3)}{ and~(#2#1#3)}

\newcommand{\R}{\mathbb{R}}
\newcommand{\N}{\mathbb{N}}
\renewcommand{\vec}[1]{{#1}}
\newcommand{\mat}[1]{{#1}}
\newcommand{\ip}[2]{\langle #1, #2 \rangle}
\newcommand{\ddt}{\frac{\rm d}{{\rm d} t}}
\newcommand{\abs}[1]{\left\vert #1 \right\vert}
\newcommand{\de}{\,\mathrm{d}}

\DeclareMathOperator{\Diag}{Diag}

\newcommand{\methodname}{{SILAS}}

\title[Data-driven discovery of bounded polynomial ODEs]{Data-driven discovery of polynomial ODEs with provably bounded solutions}
\author{Albert Alcalde, Giovanni Fantuzzi}
\address{Dept. Mathematics, Friedrich--Alexander Universit\"at Erlangen--N\"urnberg}
\email{albert.alcalde@fau.de}
\email{giovanni.fantuzzi@fau.de}
\date{\today}

\begin{document}
\begin{abstract}
    We introduce \methodname, a data-driven framework for discovering polynomial ordinary differential equations (ODEs) with provably bounded trajectories. Boundedness is certified by compact absorbing sets defined via polynomial Lyapunov functions. We jointly identify the ODE vector field and the Lyapunov function using a well-posed nonconvex optimization problem built using polynomial optimization tools. We solve this problem using an alternating block-coordinate optimization scheme with convex subproblems, whose feasibility is ensured by a novel model-agnostic initialization that identifies a candidate Lyapunov function from data. Our methods extend prior approaches for quadratic ODEs with absorbing ellipsoids to a significantly broader class of ODEs and absorbing sets. A suite of over 100 examples demonstrates that \methodname\ can recover accurate and provably bounded ODE models for a broad range of nonlinear dynamical systems.
\end{abstract}

% Needs to be after abstract in amsart
\maketitle

% Content written separately
\section{Introduction}\label{s:intro}
A central problem in data-driven modeling of dynamical systems is to learn models that not only reproduce observed data with high fidelity, but are also consistent with known structural properties of the true underlying dynamics. For instance, a model may be required to respect a known energy conservation principle \cite{greydanus2019hamiltonian,doi:10.1073/pnas.1814058116}, or be invariant under a specified symmetry transformation \cite{bronstein2021geometric,otto2025unified}. In this work, we focus on learning continuous-time models in the form of ordinary differential equations (ODEs) with provably bounded trajectories. In particular, we discover ODEs with bounded absorbing sets, that is, `trapping regions' in the state space within which all ODE solutions eventually remain. Such `bounded ODEs' typically arise when studying dissipative dynamics and are highly desirable in data-driven modeling problems because they are guaranteed to remain well-posed in regimes beyond their training data. 

\subsection{Contributions}
We develop a framework called \methodname\ (System Identification with Lyapunov-certified Absorbing Sets) for the data-driven discovery of provably bounded ODEs with polynomial vector fields of arbitrary degree. We accomplish this by combining a well-known characterization of absorbing sets via Lyapunov functions with sum-of-squares techniques from polynomial optimization, which allow one to construct polynomial ODEs consistent with a given Lyapunov function~\cite{ahmadi2023side} as well as polynomial Lyapunov functions for given polynomial ODEs~\cite{Goluskin2020}. Specifically, our contributions can be summarized as follows.
\begin{enumerate}[a), itemsep=0.15ex,
            leftmargin = \parindent, % item text indentation
            labelsep = *, % item label at margin
            widest = 9,
            ]
    \item We show that bounded polynomial ODEs with absorbing sets characterized by polynomial Lyapunov functions can be discovered from data through a well-posed nonconvex optimization problem with polynomial sum-of-squares constraints.
    
    \item We compute a feasible solution to this nonconvex optimization problem using a `block-coordinate' optimization algorithm, which alternately updates the ODE vector field and the Lyapunov function.
    Each update requires solving a convex problem over sum-of-squares polynomials, which can be posed as a semidefinite program and solved with established algorithms \cite{Lasserre2001,Parrilo2003,Nesterov2000,Laurent2009,Lasserre2010,Parrilo2013}.
    
    \item Crucially, we adapt ideas from \cite{Bramburger2024} to initialize our block-coordinate optimization algorithm with a Lyapunov function discovered from data in a model-agnostic way, rather than with an ODE identified without boundedness constraints. This ensures that every block-coordinate optimization subproblem is feasible and that our method always returns a bounded ODE.
    
    \item We demonstrate our \methodname\ framework on numerous nonlinear dynamics modeling problems, including ODE discovery problems for 109 chaotic systems in the \texttt{dysts} library~\cite{gilpin2021chaos} and a reduced-order modeling problem for a reaction-diffusion partial differential equation. These computations show that \methodname\ performs robustly if given sufficient data, also on non-polynomial systems.
\end{enumerate}

\subsection{Related work}
Absorbing sets and Lyapunov functions have already been used to discover bounded ODEs from data. For example, Lyapunov characterizations of absorbing sets have been employed in~\cite{Tang2024,goertzen2025learning} to learn neural ODEs and discrete-time neural-network maps with absorbing ellipsoids described by quadratic Lyapunov functions. The idea also underpins approaches to discover quadratic polynomial ODEs with so-called `lossless' (or `energy-preserving') quadratic nonlinearities and absorbing ellipsoids~\cite{SchlegelNoack2015,kaptanoglu2021, peng2025extending,liao2025lossless}. The term `lossless' here indicates that the homogeneous quadratic part of the Lyapunov function is a conserved quantity for the dynamics governed by the quadratic nonlinearity alone. For such systems, the ODE discovery problem requires optimizing the ODE and Lyapunov function parameters to best fit the data while making a certain matrix with bilinear dependence on the parameters positive semidefinite. 
\methodname\ generalizes this framework to arbitrary-degree polynomial ODEs and Lyapunov functions, reducing to it in the quadratic case, where the Lyapunov constraints enforce lossless nonlinearities (similar constraints apply to all even-degree polynomial ODEs, see \cref{rem:lossless-nnl}).

Lyapunov-based learning techniques also arise in imitation learning, where the goal is to discover ODE models for dynamical systems with globally asymptotically stable equilibria. Early approaches~\cite{5953529} impose global asymptotic stability through a fixed quadratic Lyapunov function (see also~\cite{ahmadi2023side} for a broader related framework for learning dynamics with `side constraints'), while more recent works~\cite{pmlr-v229-abyaneh23a,abyaneh2024globally} jointly learn the ODE and Lyapunov function from data. In particular, the techniques in~\cite{pmlr-v229-abyaneh23a} rely on polynomial parameterizations and sum-of-squares certificates of polynomial non-negativity similar to those used in this work, and may be viewed as a version of \methodname\ with a prescribed equilibrium point as the absorbing set. We also stress that there is a broad literature on applying sum-of-squares techniques to study nonlinear dynamics governed by known polynomial ODEs; see, for instance, \cite{Antonis2005,Lasserre2008,Parrilo2013,Henrion2014,Korda2014,Fantuzzi2016,Goluskin2020} for a non-exhaustive selection of works in this area.

From a computational perspective, the challenges faced by \methodname\ are similar to those stemming from the bilinear matrix inequality associated with quadratic models with absorbing ellipsoids. The `trapping SINDy' algorithm~\cite{kaptanoglu2021} handles this bilinear matrix inequality with a penalty method, while~\cite{GOYAL2025134893} enforces it exactly through an explicit nonlinear parameterization of lossless quadratic ODEs inspired by the structure of linear port-Hamiltonian systems.
The works~\cite{liao2025lossless,heide2025}, instead, bypass the bilinearity by alternately updating the ODE and Lyapunov function parameters after an unconstrained ODE discovery step. Our block-coordinate optimization strategy does the same for polynomial ODEs and Lyapunov functions of arbitrary degree, but with one crucial difference: we start from a Lyapunov function discovery step to ensure the feasibility of all subsequent updates. In the quadratic case, therefore, we obtain a provably feasible version of the methods of~\cite{heide2025}.

Finally, the idea of initializing bilinear optimization solvers using Lyapunov functions learned from data has already been proposed in the context of estimating regions of attraction of equilibria for known ODEs~\cite{Topcu2008}. The approach we take within \methodname, based on ideas in~\cite{Bramburger2024}, is different because we only have access to data, not to a known ODE. As such, it fits within the broader literature on approximating Lyapunov functions and absorbing sets from data. We mention in particular the works~\cite{Korda2020,Tacchi2025}, which use data and convex optimization to approximate Lyapunov functions describing regions of attraction, 
and the works~\cite{mauroy2023,Mauroy2020}, which approximate regions of attraction and absorbing sets with data-driven estimates of eigenfunctions of the Koopman operator associated to the system's dynamics. As we shall explain, our \methodname\ framework also combines convex optimization and data-driven approximations of the Koopman operator (precisely, of its generator), but differs from the aforementioned approaches in the Koopman operator literature as we do not use Koopman eigenfunctions.

\subsection{Outline}

Our methods are presented in \cref{s:methods}. 
Specifically, \cref{ss:lyapunov-functions,ss:model-based-lyap} review how to describe absorbing sets with Lyapunov functions and how to construct polynomial Lyapunov functions for known polynomial ODEs. In \cref{ss:data-driven-lyap}, we describe a model-agnostic way to discover absorbing sets from data. We combine these techniques into our \methodname\ framework in \cref{ss:overall-method}. The broad applicability of \methodname\ is demonstrated in \cref{s:numerics} through an extensive suite of examples. Conclusions and perspectives for further work are offered in \cref{s:conclusion}.
\section{Methods}\label{s:methods}

Consider a continuous-time, autonomous dynamical system with state space $\R^n$. We collect $m$ approximate measurements of the system's state $\vec{x}$ and of its rate-of-change $\dot{\vec{x}}=\ddt\vec{x}$ into a dataset
\begin{equation}\label{e:dataset}
    \mathcal{D} = \left\{ (\vec{x}_1,\vec{y}_1), \, \ldots,\, (\vec{x}_m, \vec{y}_m)\right\},
\end{equation}
where $\vec{x}_i \approx \vec{x}(t_i)$ and $\vec{y}_i \approx \dot{\vec{x}}(t_i)$ are the measurements at time $t_i$. The measurements need not be sequential and $\dot{x}(t_i)$ may be estimated from sequential state measurements using finite differences or any other derivative approximation.

Our goal is to identify a bounded polynomial ODE to model this data. Precisely, we look for a polynomial vector field $f:\R^n \to \R^n$  of degree $d_f \geq 1$ such that $f(\vec{x}_i)\approx \vec{y}_i$ for all data pairs $(\vec{x}_i,\vec{y}_i)$ and such that the solution $\vec{x}(t)$ of the ODE
\begin{equation}\label{e:ode}
    \dot{\vec{x}}(t) = \vec{f}(\vec{x}(t)),\quad \vec{x}(0)=\vec{x}_0
\end{equation}
remains bounded at all times and for every initial condition $\vec{x}_0 \in \R^n$. We will solve this model learning problem in \cref{ss:overall-method}, but first we introduce the tools needed to enforce the boundedness constraint.

\subsection{Proving boundedness with Lyapunov functions}\label{ss:lyapunov-functions}

Fix a vector field $f$, not necessarily polynomial. A classical approach to proving that the ODE~\cref{e:ode} has bounded solutions is to construct a continuously differentiable function $v:\R^n \to \R$, called a \emph{Lyapunov function}, satisfying suitable constraints. We will use two types of Lyapunov functions, distinguished by the constraints they satisfy. The first ones are given in the next theorem, where $v$ is said to be \emph{coercive} if $\|\vec{x}\|\to \infty$ implies $v(\vec{x})\to +\infty$.
\begin{theorem}\label{th:lyap-fun-0}
    If $v \in C^1(\R^n)$ is coercive and $-\ip{\vec{f}(\vec{x})}{\nabla v(\vec{x})} \geq 0$ for all $\vec{x}\in\R^n$, then every solution of the ODE  \cref{e:ode} is uniformly bounded in time.
\end{theorem}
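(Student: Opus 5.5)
The plan is to show that $v$ is nonincreasing along trajectories and then use coercivity to turn a bound on $v$ into a bound on the state. Fix $\vec{x}_0\in\R^n$ and let $\vec{x}(t)$ be the solution of \cref{e:ode} on its maximal forward interval of existence $[0,T^*)$. Existence and uniqueness are not stated for general $f$, so I will assume $f$ is at least continuous, which is automatic for the polynomial fields of interest. Since $v\in C^1$ and $\vec{x}$ is $C^1$, the chain rule gives
\begin{equation*}
\ddt v(\vec{x}(t)) = \ip{\nabla v(\vec{x}(t))}{\vec{f}(\vec{x}(t))} \leq 0
\end{equation*}
by hypothesis. Hence $t\mapsto v(\vec{x}(t))$ is nonincreasing on $[0,T^*)$, and $v(\vec{x}(t))\le v(\vec{x}_0)$ for all such $t$.

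Next I will use coercivity to show that the sublevel set $S=\{\vec{x}: v(\vec{x})\le v(\vec{x}_0)\}$ is bounded. Suppose instead there is a sequence $\vec{z}_k\in S$ with $\|\vec{z}_k\|\to\infty$. Coercivity then forces $v(\vec{z}_k)\to+\infty$, which contradicts $v(\vec{z}_k)\le v(\vec{x}_0)$. So $S$ is contained in a ball of some radius $R(\vec{x}_0)$, and $S$ is closed because $v$ is continuous. Since the trajectory stays in $S$, we get $\|\vec{x}(t)\|\le R(\vec{x}_0)$ for all $t\in[0,T^*)$.

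The step that needs the most care is ruling out finite-time blow-up, so that the bound holds for all $t\ge 0$ and is therefore uniform in time. Here I will invoke the standard continuation theorem for ODEs: if $T^*<\infty$, then $\|\vec{x}(t)\|\to\infty$ as $t\to T^*$ (more precisely, the solution leaves every compact set). This is impossible because the solution remains in the compact set $S$. Therefore $T^*=\infty$, and $\sup_{t\ge0}\|\vec{x}(t)\|\le R(\vec{x}_0)<\infty$. The bound depends on the initial condition but not on time, which is exactly the claim.
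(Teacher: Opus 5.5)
Your proposal is correct and follows essentially the same argument as the paper: $v$ is nonincreasing along trajectories, so each solution stays in the sublevel set $\{x:\, v(x)\le v(x_0)\}$, which is compact by coercivity. Your explicit use of the continuation theorem to rule out finite-time blow-up is a step the paper leaves implicit.
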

\begin{proof}
    If $\vec{x}(t)$ solves \cref{e:ode}, then $\ddt v(\vec{x}(t)) = \ip{\vec{f}(\vec{x}(t))}{\nabla v(\vec{x}(t))} \leq 0$ for all $t$. Then, $\vec{x}(t) \in \{ \vec{x}\in\R^n:\; v(\vec{x})\leq v(\vec{x}_0)\}$ and this set is compact since $v$ is coercive.
\end{proof}
The second type of Lyapunov functions we use provide a sharper characterization of boundedness because they ensure the existence of a compact absorbing set. This is a compact set $\mathcal{A} \subset \R^n$ such that
\begin{equation}\label{e:absorbing-set}
    \lim_{t \to \infty} \, \min_{\vec{y} \in \mathcal{A}} \|\vec{x}(t) - y\|= 0
\end{equation}
for every initial condition $\vec{x}_0$. Sufficient conditions on Lyapunov functions to characterize compact absorbing sets are provided by the next theorem, whose proof is a well-known argument based on Gr\"onwall's inequality and is included for completeness.
\begin{theorem}\label{th:lyap-fun}
If $v \in C^1(\R^n)$ is coercive and there exist real numbers $\alpha>0$ and $b \geq 0$ such that
\begin{equation}\label{e:lyapunov-inequality}
    b - v(\vec{x}) - \alpha\,  \ip{\vec{f}(\vec{x})}{\nabla v(\vec{x})}  \geq 0 \quad \forall \vec{x} \in \R^n,
\end{equation}
then the set $\mathcal{A}=\{\vec{x}\in\R^n:\;v(\vec{x})\leq b\}$ is a compact absorbing set for ODE \cref{e:ode}.
\end{theorem}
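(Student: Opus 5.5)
The plan is to apply Gr\"onwall's inequality along trajectories. Let $\vec{x}(t)$ solve \cref{e:ode} and set $\phi(t) = v(\vec{x}(t))$. By the chain rule, $\dot\phi(t) = \ip{\vec{f}(\vec{x}(t))}{\nabla v(\vec{x}(t))}$. Dividing \cref{e:lyapunov-inequality} by $\alpha>0$ and evaluating it at $\vec{x}(t)$ gives the linear differential inequality
\begin{equation*}
    \dot\phi(t) \leq \frac{b - \phi(t)}{\alpha}.
\end{equation*}
Multiplying by the integrating factor $e^{t/\alpha}$ gives $\ddt\bigl[e^{t/\alpha}(\phi(t)-b)\bigr]\leq 0$. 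Integrating then yields
\begin{equation*}
    v(\vec{x}(t)) \leq b + \bigl(v(\vec{x}_0)-b\bigr)e^{-t/\alpha}.
\end{equation*}

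Global existence comes first, and I expect it to be the only genuine subtlety. Until we know solutions exist for all $t\geq 0$, the estimate only holds on the maximal interval of existence. On that interval, however, it shows that $v(\vec{x}(t))\leq \max\{b, v(\vec{x}_0)\}$. Coercivity of $v$ makes the sublevel set $\{v\leq \max\{b,v(\vec{x}_0)\}\}$ bounded, and it is closed by continuity, so it is compact. The solution therefore stays in a compact set and cannot blow up. The standard continuation theorem, applied with $\vec{f}$ locally Lipschitz (as for the polynomial fields of interest), then gives existence for all $t\geq0$. The same coercivity-plus-continuity argument shows that $\mathcal{A}=\{v\leq b\}$ is compact.

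It remains to prove the attraction property \cref{e:absorbing-set}. The estimate gives $\limsup_{t\to\infty} v(\vec{x}(t))\leq b$. Suppose, for contradiction, that $\operatorname{dist}(\vec{x}(t),\mathcal{A})\not\to0$. Then there exist $\varepsilon>0$ and times $t_k\to\infty$ with $\operatorname{dist}(\vec{x}(t_k),\mathcal{A})\geq\varepsilon$. The points $\vec{x}(t_k)$ lie in a fixed compact sublevel set, so a subsequence converges to some $\vec{x}^*$. By continuity of $v$, $v(\vec{x}^*)\leq b$, so $\vec{x}^*\in\mathcal{A}$. On the other hand, the distance function is continuous, so $\operatorname{dist}(\vec{x}^*,\mathcal{A})\geq\varepsilon$. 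This is a contradiction.

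The minimum in \cref{e:absorbing-set} is attained because $\mathcal{A}$ is compact. It is also nonempty, since $v$ is continuous and coercive and so attains its minimum. That minimum is at most $b$, since otherwise \cref{e:lyapunov-inequality} fails at the minimizer, where $\nabla v=0$.
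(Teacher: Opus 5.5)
Your proof is correct and takes essentially the same route as the paper. Gr\"onwall's inequality yields $v(x(t))\le b+(v(x_0)-b)e^{-t/\alpha}$, which confines the trajectory to a compact sublevel set and gives $\limsup_{t\to\infty}v(x(t))\le b$; a compactness argument by contradiction then gives attraction to $\mathcal{A}$. Your version is, if anything, slightly more careful than the paper's. You justify global existence and $\mathcal{A}\neq\emptyset$, and you negate the limit correctly via a sequence $t_k\to\infty$, whereas the paper assumes the stronger condition $\min_{y\in\mathcal{A}}\|x(t)-y\|\ge\varepsilon$ for all $t$.
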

\begin{proof}
The set $\mathcal{A}$ is closed by construction and bounded because $v$ is coercive, so it is compact. To prove it is absorbing, let $\vec{x}(t)$ solve \cref{e:ode}. Since $v$ is differentiable, the chain rule yields $\ddt v(\vec{x}(t)) = \ip{\vec{f}(\vec{x}(t))}{\nabla v(\vec{x}(t))}$ for all times $t$. Then, inequality \cref{e:lyapunov-inequality} implies that
$\ddt v(\vec{x}(t)) + \frac{1}{\alpha} v(\vec{x}(t)) \leq \frac{b}{\alpha}$. We can then apply Gr\"onwall inequality (see, e.g., \cite[Lemma~2.1]{DoeringGibbon1995}) to obtain
\begin{equation}\label{e:gronwall}
    v(\vec{x}(t)) \leq  b + \left[ v(\vec{x}_0) - b \right] {\rm e}^{-t/\alpha}
\end{equation} 
for all times. Now, if $v(\vec{x}_0) \leq b$ we conclude from this inequality that $v(\vec{x}(t)) \leq b$ for all times, so $\vec{x}(t) \in \mathcal{A}$ and \cref{e:absorbing-set} holds trivially. If $v(\vec{x}_0) > b$, instead, we conclude from~\cref{e:gronwall} that $v(\vec{x}(t)) \leq v(\vec{x}_0)$ and that $\limsup_{t\to\infty} v(\vec{x}(t)) \leq b$. We claim that these two inequalities, combined with the continuity of $v$, imply \cref{e:absorbing-set}. Indeed, suppose for the sake of contradiction that $\min_{y \in \mathcal{A}}\|\vec{x}(t) - y\| \geq \varepsilon > 0$ for all $t$. Then the trajectory $\vec{x}(t)$ remains in the compact set $\mathcal{B}=\{\vec{x}\in \R^n :\; \min_{y \in \mathcal{A}}\|\vec{x}(t) - y\| \geq \varepsilon,\, v(\vec{x})\leq v(\vec{x}_0)\}$ for all $t$. But then, $\limsup_{t\to\infty} v(\vec{x}(t)) \geq\liminf_{t\to\infty} v(\vec{x}(t)) \geq \min_{\vec{x}\in\mathcal{B}} v(\vec{x}) > b$, where the last inequality is strict because the set $\mathcal{A} \cap \mathcal{B}$ is empty by construction and because the continuous function $v$ attains its minimum on the compact set $\mathcal{B}$. This, however, contradicts the inequality  $\limsup_{t\to\infty} v(\vec{x}(t)) \leq b$ deduced from~\cref{e:gronwall}.
\end{proof} 

\begin{remark}
Despite sharing the same name, the Lyapunov functions in \cref{th:lyap-fun-0,th:lyap-fun} are fundamentally different from those used to study the stability of equilibria. The latter are not considered in this work. Since no confusion can arise, we slightly abuse common terminology and call \cref{e:lyapunov-inequality} the \emph{Lyapunov inequality}. Observe also that there is no loss of generality in assuming Lyapunov functions to be nonnegative. Indeed, any Lyapunov function $v$ must attain a finite minimum value $v_{\rm min}$ because it is coercive. Then, it is immediate to verify that $w=v-v_{\rm min}$ is a nonnegative Lyapunov function characterizing the same absorbing set as $v$.
\end{remark}

\subsection{Optimizing polynomial Lyapunov functions for polynomial ODEs}\label{ss:model-based-lyap}

While Lyapunov functions are usually difficult to construct, polynomial Lyapunov functions for polynomial ODEs can be searched for using convex optimization. We explain this next, focusing on the Lyapunov functions from \cref{th:lyap-fun}. The Lyapunov functions from \cref{th:lyap-fun-0} can be handled analogously.

Fix $\alpha>0$, an invertible matrix $\mat{\Lambda} \in \R^{n\times n}$, and a vector $\vec{\mu} \in \R^n$. Let $\R[\vec{x}]_{d}$ denote the space of $n$-variate polynomials of $\vec{x}=(x_1,\ldots,x_n)$ with real coefficients and degree at most $d$. Any polynomial Lyapunov function $v$ must have even degree $d_v\geq 2$, otherwise it cannot be coercive. Then, given a polynomial vector field $f$ of degree $d_f \geq 1$, we fix an even degree $d_v \geq 2$ and search for a Lyapunov function $v$ by solving the optimization problem
\begin{equation}\label{e:lyap-opt-1}
    \min_{\substack{v \in \R[\vec{x}]_{d_v} \\ b \in \R}} \; |b| \quad
    \text{s.t.} \quad 
    \begin{cases}
        v(\vec{x}) - \|\mat{\Lambda}\vec{x} + \vec{\mu}\|^2 \geq 0 &\forall \vec{x} \in \R^n, \\
        b - v(\vec{x}) - \alpha\,  \ip{\vec{f}(\vec{x})}{\nabla v(\vec{x})}  \geq 0 &\forall \vec{x} \in \R^n.
    \end{cases}
\end{equation}
The intuition behind this problem is simple: the first constraint ensures that $v$ is coercive, the second is the Lyapunov inequality \cref{e:lyapunov-inequality}, and minimizing $|b|$ is a proxy for minimizing the size of the absorbing set defined by $v$. Both constraints are nonnegativity constraints on polynomials that depend affinely on the optimization variables, which are the constant $b$ and the coefficients of $v$ with respect to any chosen basis for $\R[\vec{x}]_{d_v}$.
We could actually fix $\mat{\Lambda}=I$ and $\vec{\mu}=0$ in \cref{e:lyap-opt-1}, but we do not because a more judicious choice can considerably improve numerical conditioning.

Problem \cref{e:lyap-opt-1} is convex, but is computationally intractable because nonnegativity constraints on polynomials are NP-hard in general \cite{Murty1987}. Consequently, we follow an established approach from polynomial optimization and replace the inequality constraints with the stronger condition that their left-hand sides are sums of squares (SOS) of other polynomials. Specifically, writing
\begin{equation*}
    \Sigma[\vec{x}]_{d} := \left\{ \sigma \in \R[\vec{x}]_d:\;\exists k\in\N,\; \rho_1,\ldots,\rho_k \in \R[\vec{x}]_{\lfloor d/2\rfloor} \; \text{s.t.}\; \sigma = \rho_1^2 + \cdots + \rho_k^2  \right\}
\end{equation*}
for the set of SOS polynomials of degree no larger than $d$, we replace problem \cref{e:lyap-opt-1} with its strengthening
\begin{equation}\label{e:lyap-opt-sos}
    \min_{\substack{v \in \R[\vec{x}]_{d_v} \\ b \in \R}} \; |b| \quad
    \text{s.t.} \quad 
    \begin{cases}
        v(\vec{x}) -  \|\mat{\Lambda} \vec{x} + \vec{\mu}\|^2 \in \Sigma[\vec{x}]_{d_v}, \\
        b - v(\vec{x}) - \alpha\,  \ip{\vec{f}(\vec{x})}{\nabla v(\vec{x})}  \in \Sigma[\vec{x}]_{d_v + d_f - 1}.
    \end{cases}
\end{equation}
While not all nonnegative polynomials are SOS in general~\cite{Hilbert1888}, this strengthening is convenient because it can be cast as a semidefinite program \cite{Lasserre2001, Parrilo2003, Nesterov2000, Lasserre2010, Parrilo2013, Laurent2009}, so it can be solved with established interior-point algorithms \cite{Nesterov1994, Ye1997, Nemirovski2007}. 
\begin{remark}[Lossless nonlinearities]\label{rem:lossless-nnl}
    Since the Lyapunov function $v$ in problem \cref{e:lyap-opt-1} is coercive, the Lyapunov inequality cannot be satisfied unless the polynomial $\ip{\vec{f}}{\nabla v}$ has even degree. For this to hold with even-degree $f$, the leading homogeneous parts of $f$ and $v$, denoted $h_f$ and $h_v$, must satisfy $\ip{h_f}{\nabla h_v} = 0$. This condition generalizes the `lossless nonlinearity' assumption for quadratic $f$ and $v$ in~\cite{kaptanoglu2021,liao2025lossless,heide2025} and is accounted for in problem \cref{e:lyap-opt-sos} through the SOS condition $b - v(\vec{x}) - \alpha\,  \ip{\vec{f}(\vec{x})}{\nabla v(\vec{x})}  \in \Sigma[\vec{x}]_{d_v + d_f - 1}$. This is because polynomials in the set $\Sigma[\vec{x}]_{d_v + d_f - 1}$ have even degree $2\lfloor (d_v + d_f - 1) / 2 \rfloor$, so odd-degree leading terms in $\ip{\vec{f}}{\nabla v}$ are set to zero in the semidefinite program formulation of~\cref{e:lyap-opt-sos}.
\end{remark}

\subsection{Discovering Lyapunov functions from data}
\label{ss:data-driven-lyap}

The SOS approach to constructing Lyapunov functions described above requires an explicit polynomial ODE, which is not available in data-driven ODE discovery problems. We therefore adapt it to discover a candidate Lyapunov function only from the dataset $\mathcal{D}$ in \cref{e:dataset}.

The idea is to use the data $\vec{y}_i \approx \vec{f}(\vec{x}_i)$ to construct a polynomial approximation for the unknown term $\ip{\vec{f}}{\nabla v}$ in the Lyapunov inequality. This requires estimating the action of the operator $v \mapsto \ip{\vec{f}}{\nabla v}$, which is the generator of the Koopman semigroup associated to the ODE \cref{e:ode}, on a basis for the polynomial space $\R[\vec{x}]_{d_v}$. We accomplish this using the following version of \emph{generator extended dynamic mode decomposition}~\cite{williams2015data, klus2020data}.

Fix a polynomial vector $\varphi=(\varphi_1,\ldots,\varphi_p)$ whose $p=\smash{\binom{n+d_v}{n}}$ entries are a basis for the polynomial space $\R[\vec{x}]_{d_v}$. Let $\vec{u} \in \R^p$ be a coefficient vector and expand
\begin{equation*}
    v(x) = \ip{\vec{u}}{\varphi(\vec{x})}
    \quad\text{and}\quad
    \ip{\vec{f}}{\nabla v} = \sum_{j=1}^p u_j \ip{\vec{f}}{\nabla \varphi_j}.
\end{equation*}
Since we want $v$ to be a Lyapunov function for a polynomial ODE of degree $d_f$, we approximate each of the terms $\ip{\vec{f}}{\nabla \varphi_j}$ with a polynomial of degree $d_v+d_f-1$. Specifically, we fix a basis $\vartheta=(\vartheta_1,\ldots,\vartheta_q)$ for the polynomial space $\R[\vec{x}]_{d_f + d_v - 1}$, which has dimension $q=\smash{\binom{n+d_v+d_f-1}{n}}$, and approximate $\ip{\vec{f}}{\nabla \varphi_j} \approx \sum_{k=1}^q G_{jk} \vartheta_k$ where the coefficients $G_{jk}$ are entries of a matrix $\mat{G} \in \R^{p \times  q}$ that solves the least-squares minimization problem
\begin{equation}\label{e:lsq}
    \min_{\mat{G} \in \R^{p \times q}} \sum_{i=1}^{m} \sum_{j=1}^p  w_i \abs{ \ip{\vec{y}_i}{\nabla \varphi_j(\vec{x}_i)}  - \sum_{k=1}^q G_{jk} \ \vartheta_k(\vec{x}_i)}^2.
\end{equation}
The positive weights $w_1,\ldots, w_m$ in this problem determine the importance of the measurements in the dataset $\mathcal{D}$. A standard choice is $w_i=\frac1m$, but other choices may lead to better results~\cite{BramburgerColebrook2025}.
The optimal solution of \cref{e:lsq} is
\begin{equation*}
    \mat{G} = \left( \mat{A}\mat{W}\mat{B}^\top \right) \left( \mat{B}\mat{W}\mat{B}^\top\right)^{\dagger},
\end{equation*}
where $W = \Diag(w_1,\ldots,w_m)$ is the diagonal matrix of weights, $\dagger$ indicates the Moore--Penrose pseudoinverse, and the matrices $A\in \R^{p\times m}$ and $B\in\R^{q\times m}$ are given by
\begin{gather*}
    \mat{A} = \begin{bmatrix}
        \ip{\vec{y}_1}{\nabla \varphi_1(\vec{x}_1)} & \cdots & \ip{\vec{y}_m}{\nabla \varphi_1(\vec{x}_m)}\\
        \vdots & \ddots & \vdots \\
        \ip{\vec{y}_1}{\nabla \varphi_p(\vec{x}_1)} & \cdots & \ip{\vec{y}_m}{\nabla \varphi_p(\vec{x}_m)}
    \end{bmatrix}
    \quad\text{and}\quad
    \mat{B} =
    \begin{bmatrix}
        \vartheta_1(\vec{x}_1) & \cdots & \vartheta_1(\vec{x}_m)\\
        \vdots & \ddots & \vdots \\
        \vartheta_q(\vec{x}_1) & \cdots & \vartheta_q(\vec{x}_m)
    \end{bmatrix}.
\end{gather*}
Given the matrix $\mat{G}$ and any polynomial $v=\ip{\vec{u}}{\varphi} \in \R[x]_{d_v}$ with coefficients $\vec{u}\in\R^p$, we can then approximate
\begin{equation*}
    \ip{\vec{f}}{\nabla v} \approx \ip{\vec{u}}{\mat{G} \vartheta}.
\end{equation*}
In general, this is only an approximation because the dataset $\mathcal{D}$ used to construct the least-squares problem~\cref{e:lsq} consists of approximate measurements of a dynamical system's state and rate-of-change, and because the true dynamics need not be governed exactly by a polynomial ODE of the chosen degree $d_f$. However, motivated by the Weierstrass approximation theorem and convergence results for extended dynamic mode decomposition \cite{williams2015data,Bramburger2024,klus2020data,KordaMezic2018,Klus2016}, the approximation should be accurate on compact sets if $d_f$ is sufficiently large and the dataset is sufficiently rich in those sets.

Now that we have a data-driven estimate of $\ip{\vec{f}}{\nabla v}$ for all polynomials $v\in\R[\vec{x}]_{d_v}$, we can attempt to discover an absorbing set for the unknown dynamical system generating the data. We do so by solving the following approximation of \cref{e:lyap-opt-sos}, in which the Lyapunov inequality is replaced by its data-driven approximation
\begin{equation}\label{e:lyap-opt-data-cnstr}
    \min_{\substack{\vec{u} \in \R^p \\ b \in \R}} \; |b| \quad
    \text{s.t.} \quad 
    \begin{cases}
        \ip{\vec{u}}{\varphi(\vec{x})} - \|\mat{\Lambda} \vec{x} + \vec{\mu}\|^2 \in \Sigma[\vec{x}]_{d_v}, \\
        b - \ip{\vec{u}}{\varphi(\vec{x}) + \alpha G \vartheta(\vec{x})}  \in \Sigma[\vec{x}]_{d_v + d_f - 1}.
    \end{cases}
\end{equation}
As before, we require $d_v \geq 2$ to be an even integer. If this problem is infeasible, we fix a penalty parameter $\kappa > 0$, select a computationally tractable norm for the polynomial space $\smash{\R[\vec{x}]_{d_v + d_f - 1}}$ (e.g., the $\ell^2$ norm of the vector of polynomial coefficients with respect to a convenient basis), and solve the penalized problem
\begin{equation}\label{e:lyap-opt-data-penalized}
    \begin{aligned}
        \min_{\substack{\vec{u} \in \R^p,\; b \in \R \\ \sigma \in \R[\vec{x}]_{d_v + d_f - 1}}} \quad 
        &|b| + \kappa \left\| b - \ip{\vec{u}}{\varphi + \alpha G \vartheta}  - \sigma \right\|_{\R[\vec{x}]_{d_v + d_f - 1}}
        \\
        \text{s.t.} \quad 
        &\ip{\vec{u}}{\varphi(\vec{x})} - \|\mat{\Lambda} \vec{x} + \vec{\mu}\|^2 \in \Sigma[\vec{x}]_{d_v}, \\
        &\sigma(\vec{x}) \in \Sigma[\vec{x}]_{d_v + d_f - 1}.
    \end{aligned}
\end{equation}
This problem is feasible because $\varphi(\vec{x})$ is a basis for $\R[\vec{x}]_{d_v}$ and $d_v \geq 2$, so it is always possible to choose $\vec{u}$ such that $\ip{\vec{u}}{\varphi(\vec{x})} - \|\mat{\Lambda} \vec{x} + \vec{\mu}\|^2=0\in\Sigma[x]_{d_v}$.

\subsection{The \methodname\ framework for bounded ODE discovery}\label{ss:overall-method}

We now combine the tools described above into our \methodname\ framework for discovering provably bounded polynomial ODEs from data.

Choose a model degree $d_f \geq 1$ and a Lyapunov function degree $d_v \geq 2$. Set $p = \smash{\binom{n+d_v}{n}}$, $r = \smash{\binom{n+d_f}{n}}$, and fix polynomial vectors
\begin{align*}
    \varphi(x) = \big(\varphi_1(x), \ldots, \varphi_p(x)\big) 
    \qquad\text{and}\qquad
    \zeta(x) = \big(\zeta_1(x), \ldots, \zeta_r(x)\big)
\end{align*}
whose entries are bases for the polynomial spaces $\R[x]_{d_v}$ and $\R[x]_{d_f}$, respectively. We parametrize the vector field of the polynomial ODE $\dot{\vec{x}}=\vec{f}(\vec{x})$ and the Lyapunov function certifying its boundedness as
\begin{equation*}
    f(\vec{x})= \mat{F} \zeta(\vec{x})
    \qquad\text{and}\qquad
    v(\vec{x}) = \ip{\vec{u}}{\varphi(\vec{x})},
\end{equation*}
respectively, where $\mat{F} \in \R^{n \times r}$ and $\vec{u}\in\R^p$ contain the ODE and Lyapunov function parameters. Note that $\nabla v = \nabla\varphi^\top u$  where the Jacobian matrix $\nabla \varphi : \R^n \to \R^{p \times n}$ has entries $(\nabla\varphi)_{ij} = \partial \varphi_i / \partial x_j$. 
Finally, we fix the following hyperparameters: an invertible matrix $\mat{\Lambda} \in \R^{n\times n}$, a vector $\vec{\mu} \in \R^n$, and positive real numbers $\alpha$, $\beta$, $\kappa$, $\varepsilon_1$, $\varepsilon_2$, $\varepsilon_3$ and $\varepsilon_4$.

Given the dataset $\mathcal{D}$ in \cref{e:dataset}, we discover a bounded polynomial ODE and a Lyapunov function certifying boundedness by solving the following optimization problem:
\begin{align}\label{eq:monster}
        \min_{\substack{
        \mat{F} \in \R^{n \times r}\\
        \vec{u} \in \R^p\\
        b,c \in \R
        }}
        \quad
        & \left(\sum_{i=1}^m \left\| \mat{F} \zeta(x_i) - y_i \right\|^2 \right)^{\frac12} + \varepsilon_1 |c-1| + \varepsilon_3 |b| + \varepsilon_2 \| \mat{F} \|_{\ell^1} 
        +\varepsilon_4 \|u\|_{\ell^1}
        \\ \nonumber
        \text{s.t.} \quad
        &b - c \ip{\vec{u}}{\varphi(\vec{x})} - \alpha \ip{\mat{F} \zeta(\vec{x})}{\nabla \varphi(\vec{x})^\top \vec{u}} \in \Sigma[\vec{x}]_{d_v+d_f-1},
        \\ \nonumber
        &\ip{\vec{u}}{\varphi(\vec{x})}  -  \|\mat{\Lambda} \vec{x} + \vec{\mu}\|^2 \in \Sigma[\vec{x}]_{d_v},
        \\ \nonumber
        &0\leq c \leq 1,
        \\ \nonumber 
        &|b| \leq \beta c.
\end{align}
This problem is not convex because the term $\ip{\mat{F} \zeta(\vec{x})}{\nabla \varphi(\vec{x})^\top \vec{u}}$ depends bilinearly on the optimization variables $\mat{F}$ and $\vec{u}$. However, as we show next, it is well-posed and any feasible point corresponds to an ODE with bounded solutions.
\begin{theorem}\label{th:monster-problem}
    The following statements hold for every choice of $\Lambda\in\R^{n\times n}$ invertible, $\vec{\mu}\in\R^n$, and $\alpha,\beta,\varepsilon_1,\varepsilon_2,\varepsilon_3,\varepsilon_4>0$:
    \begin{enumerate}[a), noitemsep]
        \item Problem \cref{eq:monster} is feasible and has an optimal solution.
        \item If $\mat{F}$ is admissible in \cref{eq:monster}, then the ODE $\dot{x}=\mat{F} \zeta(x)$ has bounded solutions.
    \end{enumerate}
\end{theorem}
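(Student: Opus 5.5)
The plan is to treat the two parts separately. For part a), I will show feasibility with an explicit point and then get existence of a minimizer from a compactness argument. For part b), I will split into the cases $c>0$ and $c=0$ and reduce each to one of \cref{th:lyap-fun,th:lyap-fun-0}.

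\emph{Feasibility.} Take $\mat{F}=0$, $c=0$ and $b=0$. Choose $\vec{u}$ so that $\ip{\vec{u}}{\varphi(\vec{x})}=\|\mat{\Lambda}\vec{x}+\vec{\mu}\|^2$, which is possible because $\varphi$ is a basis of $\R[\vec{x}]_{d_v}$ and $d_v\geq 2$.
\begin{itemize}
\item The second SOS constraint holds, since its left-hand side is the zero polynomial.
\item The first SOS constraint reads $0-0-0=0\in\Sigma[\vec{x}]_{d_v+d_f-1}$, so it also holds.
\item The scalar constraints hold trivially: $0\le c\le 1$ and $|b|=0\le \beta c$.
\end{itemize}

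\emph{Existence of a minimizer.} The objective is continuous and nonnegative, so its infimum $J^*$ over the feasible set is finite. Take a minimizing sequence $(\mat{F}_k,\vec{u}_k,b_k,c_k)$. The objective dominates $\varepsilon_2\|\mat{F}\|_{\ell^1}+\varepsilon_4\|\vec{u}\|_{\ell^1}+\varepsilon_3|b|$, and $c\in[0,1]$. Hence the sequence lies in a bounded set and has a convergent subsequence, by Bolzano--Weierstrass. It remains to check that the limit is feasible.
\begin{itemize}
\item The scalar constraints are closed conditions, so they pass to the limit.
\item The left-hand sides of the two SOS constraints have coefficients that depend continuously on the variables; the bilinear term is still continuous.
\item For fixed degree, $\Sigma[\vec{x}]_{d}$ is a closed convex cone. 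It is the image of the PSD cone under a linear map $Q\mapsto z(\vec{x})^\top Q z(\vec{x})$, where $z$ is the monomial vector of degree $\lfloor d/2\rfloor$. Closedness then follows from the standard fact that a sequence of such polynomials with bounded coefficients has Gram matrices with bounded trace, so the Gram matrices have a convergent subsequence.
\end{itemize}
So the limit is feasible, and by continuity it attains $J^*$. I expect the closedness of the SOS cone to be the only nontrivial point; I would cite it as a known result, or sketch the bounded-trace argument above.

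\emph{Part b), case $c>0$.} Let $v=\ip{\vec{u}}{\varphi}$ and $f=\mat{F}\zeta$.
\begin{itemize}
\item The second constraint gives $v\ge\|\mat{\Lambda}\vec{x}+\vec{\mu}\|^2$, which is coercive because $\mat{\Lambda}$ is invertible. In particular $v\ge 0$.
\item The first constraint gives $b-c\,v-\alpha\ip{f}{\nabla v}\ge 0$ pointwise, since SOS polynomials are nonnegative.
\item Dividing by $c$ yields $b/c - v - (\alpha/c)\ip{f}{\nabla v}\ge 0$.
\item Replacing $b/c$ by $\max(b,0)/c\ge 0$ only enlarges the left-hand side, so the inequality still holds.
\end{itemize}
Now \cref{th:lyap-fun} with $\alpha/c$ in place of $\alpha$ gives a compact absorbing set, and hence bounded solutions.

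\emph{Part b), case $c=0$.} The constraint $|b|\le\beta c$ forces $b=0$. The first constraint then becomes $-\alpha\ip{f}{\nabla v}\ge 0$ on $\R^n$. Since $v$ is coercive, \cref{th:lyap-fun-0} gives uniformly bounded solutions.
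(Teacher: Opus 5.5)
Your proposal is correct and follows essentially the same route as the paper. It uses the same explicit feasible point $(\mat{F},b,c)=(0,0,0)$ with $\ip{\vec{u}}{\varphi}=\|\mat{\Lambda}\vec{x}+\vec{\mu}\|^2$, a compactness argument on a minimizing sequence (via coercivity of the objective, continuity, and closedness of the SOS cone), and the split $c=0$ versus $c>0$ reducing to \cref{th:lyap-fun-0} and \cref{th:lyap-fun}. Your extra steps only make explicit what the paper leaves implicit or cites: replacing $b/c$ by $\max(b,0)/c$ so that the hypothesis $b\geq 0$ of \cref{th:lyap-fun} holds, and sketching why $\Sigma[\vec{x}]_d$ is closed.
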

\begin{proof}
    Problem \cref{eq:monster} is feasible because its constraints are satisfied when $b=0$, $c=0$, $\mat{F}=0$, and $\vec{u}$ is a vector such that $\ip{\vec{u}}{\varphi(\vec{x})}  =  \|\mat{\Lambda} \vec{x} + \vec{\mu}\|^2$, which exists since $\varphi(\vec{x})$ is a basis for $\R[x]_{d_v}$ and $d_v\geq 2$. 
    To see that the minimum is attained, note that the objective function is coercive, so every minimizing sequence is uniformly bounded and has a convergent subsequence. The limit of such a subsequence is an optimal solution for \cref{eq:monster} because the objective and constraints of this problem depend continuously on the optimization variables and because the set of SOS polynomials is closed (see, e.g., \cite[Corollary~3.5]{Laurent2009}).
    Finally, suppose $(\vec{u},\mat{F},b,c)$ is an admissible tuple. If $c=0$, then $b=0$ and solutions to the ODE $\dot{x}=\mat{F} \zeta(x)$ are bounded by \cref{th:lyap-fun-0}. If $c>0$, instead, boundedness follows from \cref{th:lyap-fun}.
\end{proof}
\begin{algorithm}[t]
\caption{The \methodname\ framework for bounded ODE discovery}
\label{alg:learn-ode}
\begin{algorithmic}
\REQUIRE Dataset $\mathcal{D}$\\
\REQUIRE ODE degree $d_f\geq 1$ and polynomial basis $\zeta$ for $\R[x]_{d_f}$
\REQUIRE Lyapunov function degree $d_v \geq 2$ and polynomial basis $\varphi$ for $\R[x]_{d_v}$
\REQUIRE A polynomial basis $\vartheta$ for $\R[x]_{d_v+d_f-1}$
\REQUIRE Parameters: $\Lambda \in \R^{n \times n}$ invertible, $\vec{\mu} \in \R^n$, $\alpha,\beta,\kappa,\varepsilon_1,\varepsilon_2,\varepsilon_3,\varepsilon_4 > 0$
\REQUIRE Maximum number of refinement iterations $K$
\ENSURE Bounded polynomial ODE and associated Lyapunov function

% \COMMENT{Step 1: Discover initial Lyapunov function}
\COMMENT{The SILAS algorithm}
\STATE $\vec{u}_0 \gets$ solve problem \cref{e:lyap-opt-data-cnstr} 
\IF{problem \cref{e:lyap-opt-data-cnstr} is infeasible}
  \STATE $\vec{u}_0 \gets$ solve problem \cref{e:lyap-opt-data-penalized}
\ENDIF
% \COMMENT{Step 2: Discover a bounded polynomial ODE}
\FOR{$k = 1$ to $K$}
  \STATE $(\mat{F}_k, b_{k-1/2}, c_k) \gets$ solve problem \cref{eq:monster} for fixed $\vec{u}=\vec{u}_{k-1}$
  \STATE $(\vec{u}_k, b_k) \gets$ solve problem \cref{eq:monster} for fixed $\mat{F}=\mat{F}_{k}$ and $c=c_k$
  \STATE \textbf{if} converged \textbf{then} \textbf{break}
\ENDFOR
\RETURN Bounded ODE $\dot{\vec{x}} = \mat{F}_k \zeta(\vec{x})$ and Lyapunov function $v(\vec{x}) = \ip{\vec{u}_k}{\varphi(\vec{x})}$
\end{algorithmic}
\end{algorithm}
Solving \cref{eq:monster} is difficult in practice because this problem is not convex. While recent techniques for nonconvex optimization over SOS polynomials can compute local minimizers \cite{Cunis2025,Olucak2025}, we follow \cite{heide2025} and settle for a simpler heuristic strategy summarized in \cref{alg:learn-ode}, which alternates between optimizing the triple $(\mat{F},b,c)$ for fixed $\vec{u}$ and optimizing the pair $(\vec{u},b)$ for fixed $(\mat{F},c)$. These are convex and semidefinite-representable problems, which can be solved using mature software~\cite{mosek,Lofberg2004}. We initialize the alternating minimization process using an optimal solution $(\vec{u}_0, b_0)$ of the data-driven Lyapunov function discovery problem \cref{e:lyap-opt-data-cnstr} or, if this is infeasible, of its penalized version \cref{e:lyap-opt-data-penalized}. We then iterate for a maximum of $K$ iterations or until the iterates stabilize (this, however, cannot be guaranteed without further assumptions and analysis, which we leave for future work; see, e.g. \cite{Bertsekas1999,Grippo1999,Beck2013} and references therein for more on the convergence of block-coordinate optimization methods). By \cref{th:monster-problem}, the matrix $\mat{F}$ and vector $\vec{u}$ obtained at the last iteration correspond to a bounded polynomial ODE and a Lyapunov function certifying boundedness.  
\section{Numerical examples}\label{s:numerics}

We now illustrate our \methodname\ framework on a two-dimensional system with no absorbing ellipsoid, 109 chaotic ODEs from the \texttt{dysts} library \cite{gilpin2021chaos}, and a reduced-order modeling problem for a reaction-diffusion PDE. Our implementation is in MATLAB and uses YALMIP \cite{Lofberg2004} to set up optimization problems, MOSEK v11.1.11 \cite{mosek} to solve them, and ChebFun \cite{Driscoll2014} to handle polynomials in the multivariate Chebyshev basis, which we use for numerical stability. Our code is freely available from \url{https://github.com/giofantuzzi/silas}.

\subsection{A bounded system with no absorbing ellipsoid}\label{ss:ex3}

As a first example, we apply \methodname\ to the two-dimensional ODE $\dot{\vec{x}} = \vec{f}^*(\vec{x})$ with $\vec{x} = (x_1,x_2)$ and
\begin{equation}\label{eq:ex3}
f^*(\vec{x}) =
\begin{pmatrix}
x_1 - x_2 - x_1 x_2^{2} + x_2^{3} \\
x_1 + x_2 - x_1^{2} x_2 - x_1^{3}
\end{pmatrix}.
\end{equation}
\Cref{fig:ex3_trainingData} shows that all solutions are bounded and converge either to a limit cycle or to one of five fixed points at $(0,0)$ and $(\pm1, \pm1)$. As we prove in \cref{app:analyticalEx3}, boundedness cannot be proven using quadratic Lyapunov functions, but quartic ones suffice. For example, the quartic polynomial Lyapunov function
\begin{equation}\label{eq:quarticLyap}
v(x_1,x_2)
=
x_1^2+x_2^2+\frac13 \left( x_1^4 + x_1^3 x_2 - x_1 x_2^3 + x_2^4\right)
\end{equation}
is coercive and satisfies the Lyapunov inequality \cref{e:lyapunov-inequality} with $\alpha = 1$ and $b = 15$. The corresponding absorbing set is shown in \Cref{fig:ex3_trainingData}.
\begin{figure}[t]
    \centering
    \includegraphics[width=0.3\linewidth]{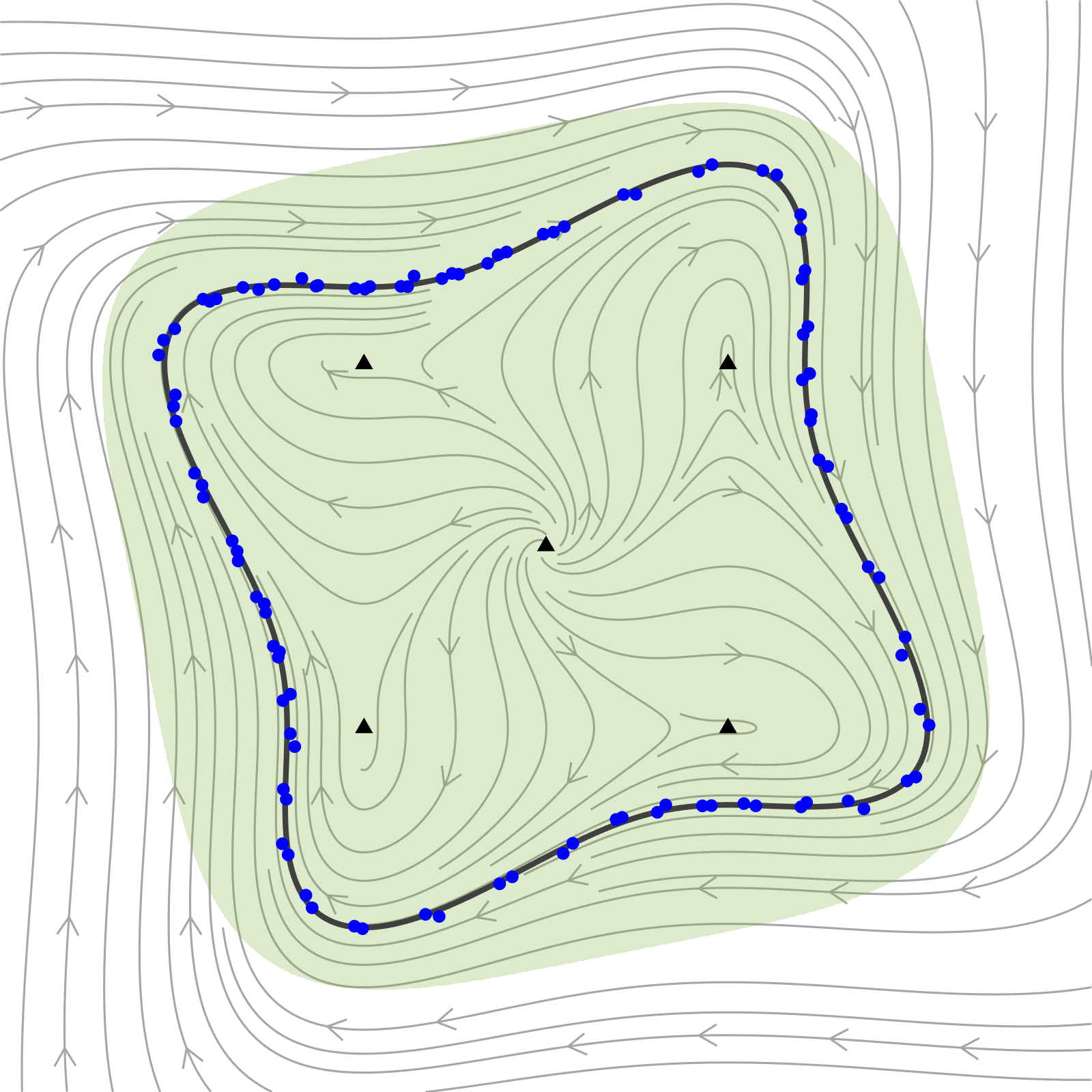}
    \caption{State-space representation of the two-dimensional cubic system from \cref{ss:ex3}. The figure shows the vector field $\vec{f}^*$ (gray arrows), the limit cycle (solid black line), the equilibria (black triangles), the absorbing set defined by \Cref{eq:quarticLyap} with $\alpha = 1$ and $b = 15$ (green shaded region), and the noisy state measurements used for model discovery (blue dots).}
    \label{fig:ex3_trainingData}
\end{figure}

\subsubsection{Data generation}
To generate a dataset for \methodname, we simulate the system until it settles on the limit cycle, then measure its state 100 times at intervals of 0.101 time units. Measurement errors are simulated by adding to each state measurement a normally distributed random perturbation with zero mean and with variance chosen to make the expected error magnitude 1\% of the exact measurement. These noisy measurements $\vec{x}_i$, shown as blue dots in \Cref{fig:ex3_trainingData}, are used to recover rate-of-change measurements $\vec{y}_i$ through an 8th-order central finite-difference formula. The final dataset comprises $m=92$ measurement pairs $(x_i, y_i)$.

\subsubsection{Results}
We apply \cref{alg:learn-ode} to the dataset described above for different ODE degrees $d_f$ and Lyapunov function degrees $d_v$. The hyperparameters are set to $\alpha=1$, $\beta=100$, $\kappa = 0.1$, $\varepsilon_1=0.1$, $\varepsilon_2=\varepsilon_3=0.01$, $\varepsilon_4=10^{-6}$, $\mat{\Lambda}=\frac{2}{5}I$,  $\vec{\mu}=0$ and $K = 5$.

\begin{figure}[t]
    \centering
    \includegraphics[width = \linewidth]{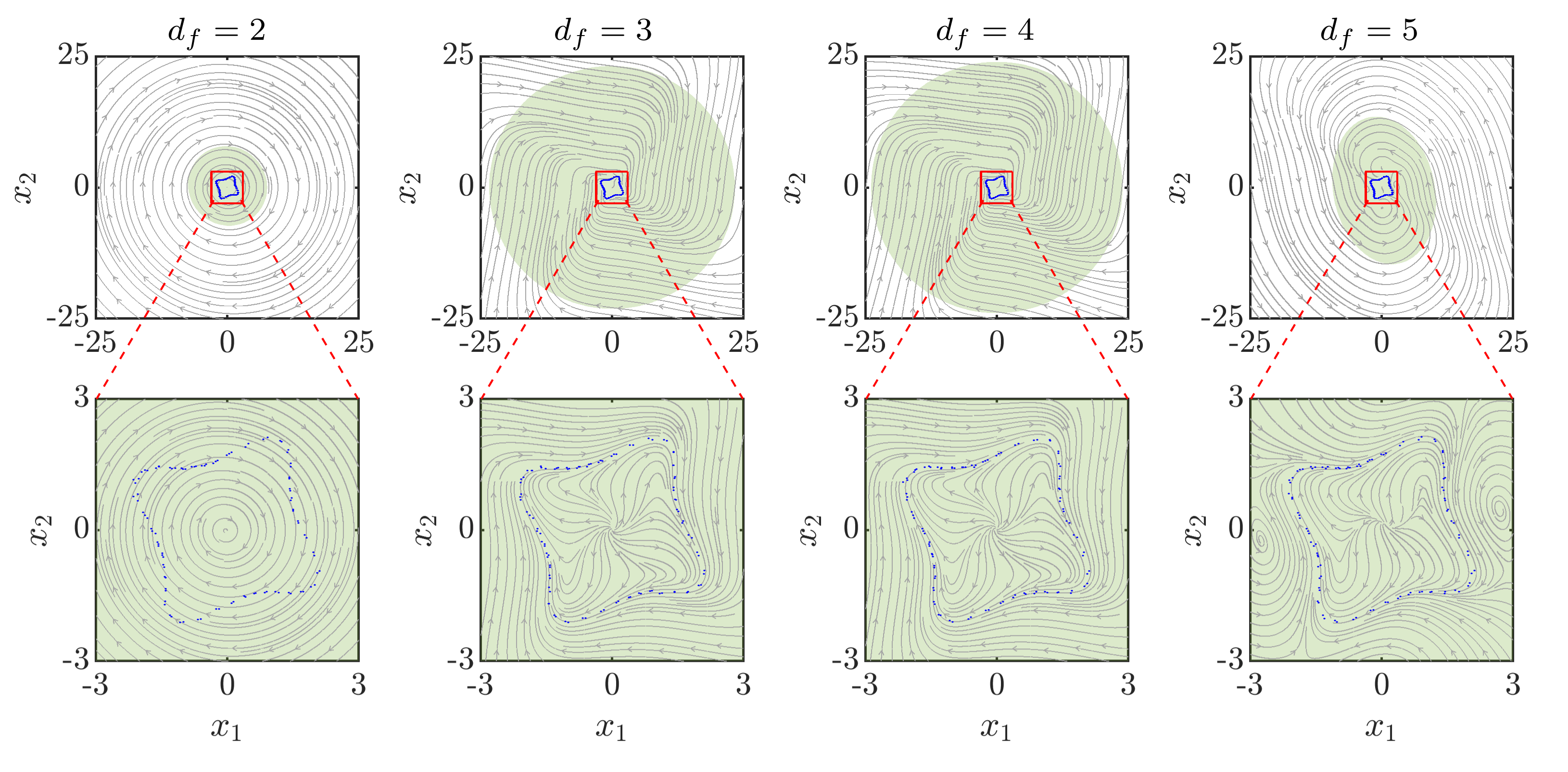}
    \vspace*{-25pt}
    \caption{Learned vector fields and corresponding absorbing sets (shaded in green) for Lyapunov function degree $d_v = 2$ and model degrees $d_f = 2,3,4,5$. Panels in the bottom row show a zoomed-in view of the box $[-3,3]^2$. \label{fig:cubic_dv2_results}}
\end{figure}

\begin{figure}[t]
    \centering
    \includegraphics[width = \linewidth]{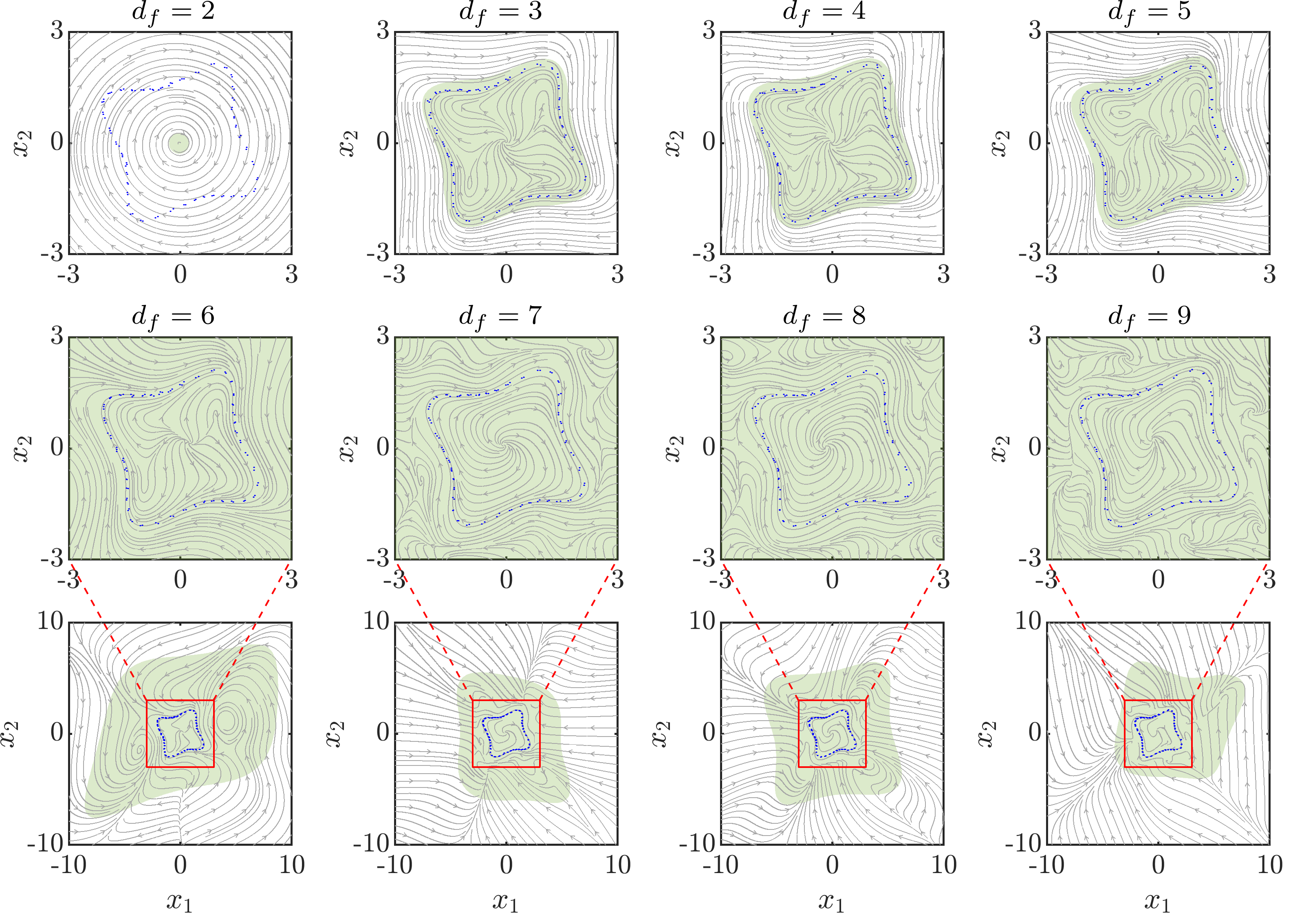}
    \vspace*{-20pt}
    \caption{Learned vector fields and corresponding absorbing sets (shaded in green) for Lyapunov function degree $d_v = 4$ and model degrees $d_f = 2,\ldots,9$. The bottom row shows zoomed-out views of the panels in the middle row. \label{fig:cubic_dv4_results}}
\end{figure}

\begin{figure}[t]
    \centering
    \includegraphics[width=\linewidth]{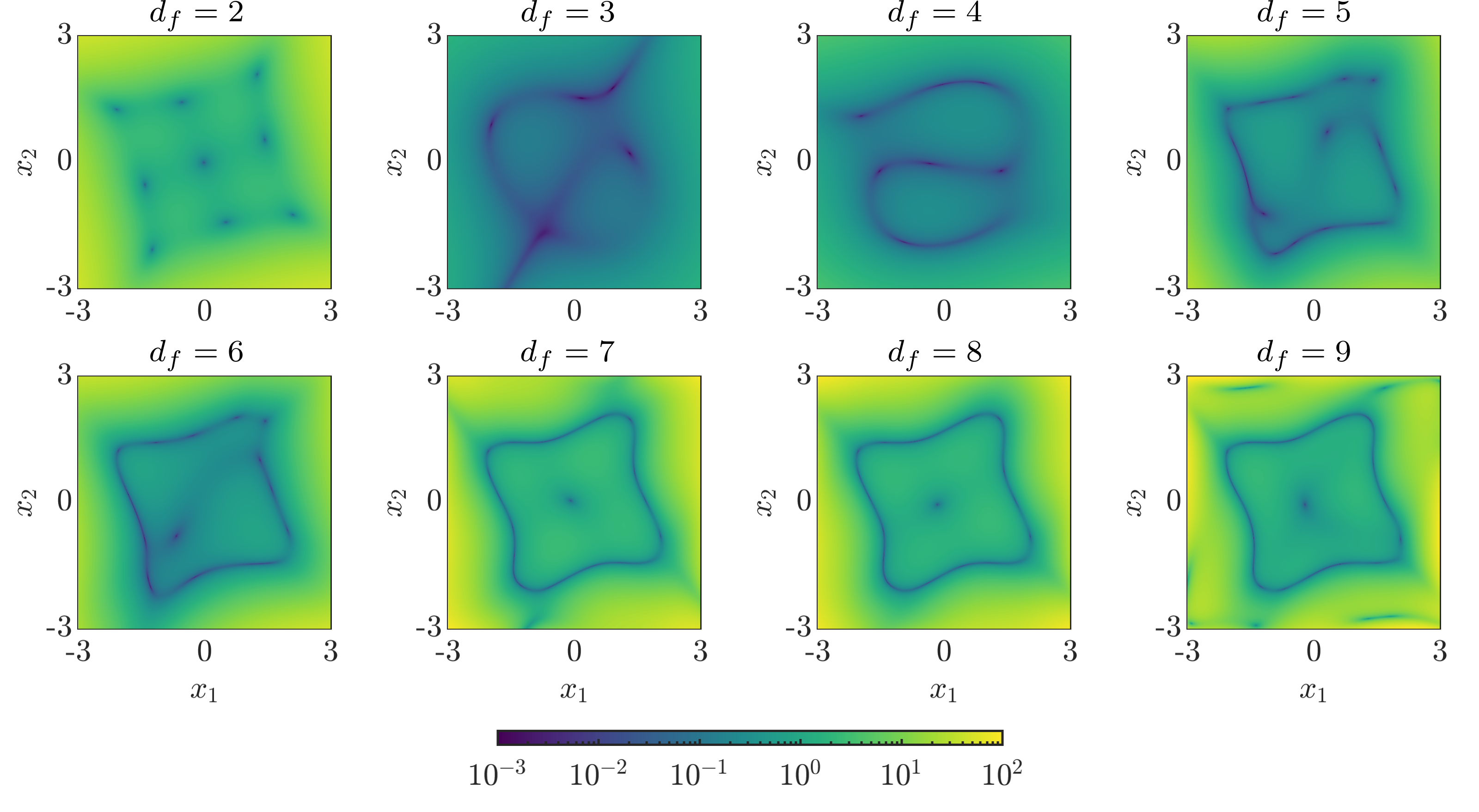}
    \vspace*{-20pt}
    \caption{Pointwise $2$-norm errors between the exact vector field of the cubic system in \cref{ss:ex3} and the vector fields learned with Lyapunov function degree $d_v = 4$ and model degree $d_f = 2,\ldots,9$.}
    \label{fig:cubic-err-dv4}
\end{figure}

We first fix $d_v=2$. The learned vector fields and certified absorbing sets for $d_f=2,3,4,5$ are shown in \Cref{fig:cubic_dv2_results}. As expected, $d_f=2$ is not expressive enough to capture the dynamics, yielding a system with a globally stable equilibrium at the origin rather than a limit cycle. The model for $d_f=3$, instead, has an attracting limit cycle and a certified absorbing ellipsoid. Thus, while the true system does not admit a quadratic Lyapunov function, it can be approximated by one that does, which \methodname\ recovers. Increasing $d_f$ further to $4$ and $5$ results in bounded ODE models with absorbing ellipsoids of varying size that accurately capture the true limit cycle dynamics, but may behave differently in other regions of the state space. This is expected because the training dataset reflects only the limit cycle dynamics. 

Next, we learn models with $d_v=4$ and $d_f=2,\dots,9$. The learned vector fields $f$ and absorbing sets are shown in \Cref{fig:cubic_dv4_results}, while the pointwise $2$-norm error $\|\vec{f}(x) - \vec{f}^*(x)\|_{2}$ is shown in \Cref{fig:cubic-err-dv4}. The model with the smallest maximum pointwise error is obtained for $d_f=3$. This model recovers well the limit cycle dynamics, generalizes well away from it, and its absorbing set is very tight around the attracting limit cycle. Increasing $d_f$ further does not affect the reconstruction of the limit cycle in any significant way, but reduces accuracy away from it. Again, this is expected because the training dataset contains only measurements from the limit cycle.

Finally, for both choices of the Lyapunov function degree $d_v$, the learned ODEs of degree $d_f=2k-1$ and $d_f=2k$ behave very similarly. This is a signature of the need for `lossless nonlinearities' in even-degree models (cf. \cref{rem:lossless-nnl}). Nevertheless, we stress that our even-degree ODEs do \emph{not} necessarily reduce to odd-degree ones of lower degree because the extra terms need not be set to zero by the optimizer. For example, the degree-$6$ ODE we obtained with $d_v=4$ contains nontrivial sextic terms, which we do not report for brevity. In contrast, the coefficients of degree-$4$ terms in our quartic ODE are all smaller than $10^{-7}$ and are at least three orders of magnitude smaller than other coefficients, suggesting that we discovered a cubic ODE.

\subsection{Chaotic ODEs from the \texttt{dysts} library}

Next, we discover bounded polynomial ODEs for the 109 autonomous chaotic systems in the \texttt{dysts} library \cite{gilpin2021chaos} with dimension up to $n = 6$ and no reported unbounded variables. Of these 109 ODEs, 82 have a polynomial vector field. Some of them, such as the Lorenz system, are known to admit absorbing sets. Others, such as the \texttt{SprottA} and \texttt{SprottB} systems, have bounded chaotic attractors but exhibit blowup for certain initial conditions. Nevertheless, we can apply \methodname\ to learn bounded models for the attractor dynamics. 

\subsubsection{Data generation}

We generate data for \methodname\ using the simulation tools provided with the \texttt{dysts} library. We simulate each of the 109 systems for 10 times the system's characteristic time $T$ reported by the library and starting from the default initial condition. We collect state and rate-of-change measurements with a fixed timestep equal to the minimum of $0.25$, $T/500$, or 20 times the default timestep reported by the library. These exact measurements are used to test the accuracy of the ODEs discovered by \methodname. In the training dataset $\mathcal{D}$, instead, we keep only the first half of the state measurements $\vec{x}_i$ from the simulation and estimate the corresponding rate-of-change values $\vec{y}_i$ with an order-8 centered difference scheme.

\subsubsection{Hyperparameter choices}\label{ss:dysts-hyperparams}
All results in this section were obtained by fixing the hyperparameters in \cref{alg:learn-ode} to
$\alpha = 0.1$, 
$\beta = 10^6$, 
$\kappa = 0.1$,
$\varepsilon_1 = \varepsilon_2 = \varepsilon_3 = 0.01$, 
$\varepsilon_4 = 10^{-6}$, and
$K=1$.
The matrix $\mat{\Lambda}$ and the vector $\vec{\mu}$ were adapted to each system by fixing $\mat{\Lambda} = 0.1 \mat{\Lambda}_0$ and $\vec{\mu} = 0.1 \vec{\mu}_0$, where $\mat{\Lambda}_0$ and $\vec{\mu}_0$ are selected such that the function $\vec{z}\mapsto \mat{\Lambda}_0^{-1}(\vec{z} - \vec{\mu}_0)$ maps the hypercube $[-1,1]^n$ to a box bounding the training data with a 25\% buffer. Precisely, if  $\vec{x}^{\rm max}$ and $\vec{x}^{\rm min}$ are vectors listing the maximum and minimum values of each state variable over the dataset, then $\mat{\Lambda}_0$ is the diagonal matrix
\begin{equation*}
    \mat{\Lambda}_0 = \begin{pmatrix}
        \frac{8}{4 (x^{\rm max}_1 - x^{\rm min}_1) + |x^{\rm max}_1| + |x^{\rm min}_1|}
        \\
        &\ddots \\
        &&
        \frac{8}{4 (x^{\rm max}_n - x^{\rm min}_n) + |x^{\rm max}_n| + |x^{\rm min}_n|}
    \end{pmatrix}
\end{equation*}
while $\mu_0$ is defined as
\begin{equation*}
    \vec{\mu}_0 = \begin{pmatrix}
        -\frac{4 (x^{\rm max}_1 + x^{\rm min}_1) + |x^{\rm max}_1| - |x^{\rm min}_1|}{4 (x^{\rm max}_1 - x^{\rm min}_1) + |x^{\rm max}_1| + |x^{\rm min}_1|}
        \\
        \vdots\\
        -\frac{4 (x^{\rm max}_n + x^{\rm min}_n) + |x^{\rm max}_n| - |x^{\rm min}_n|}{4 (x^{\rm max}_n - x^{\rm min}_n) + |x^{\rm max}_n| + |x^{\rm min}_n|}
    \end{pmatrix}.
\end{equation*}
For numerical stability, all polynomial models and Lyapunov functions are expressed using multivariate Chebyshev bases in the scaled state vector $\mat{\Lambda}_0 \vec{x} + \vec{\mu}_0$.
\begin{figure}[t]
    \centering
    \includegraphics[width=\linewidth]{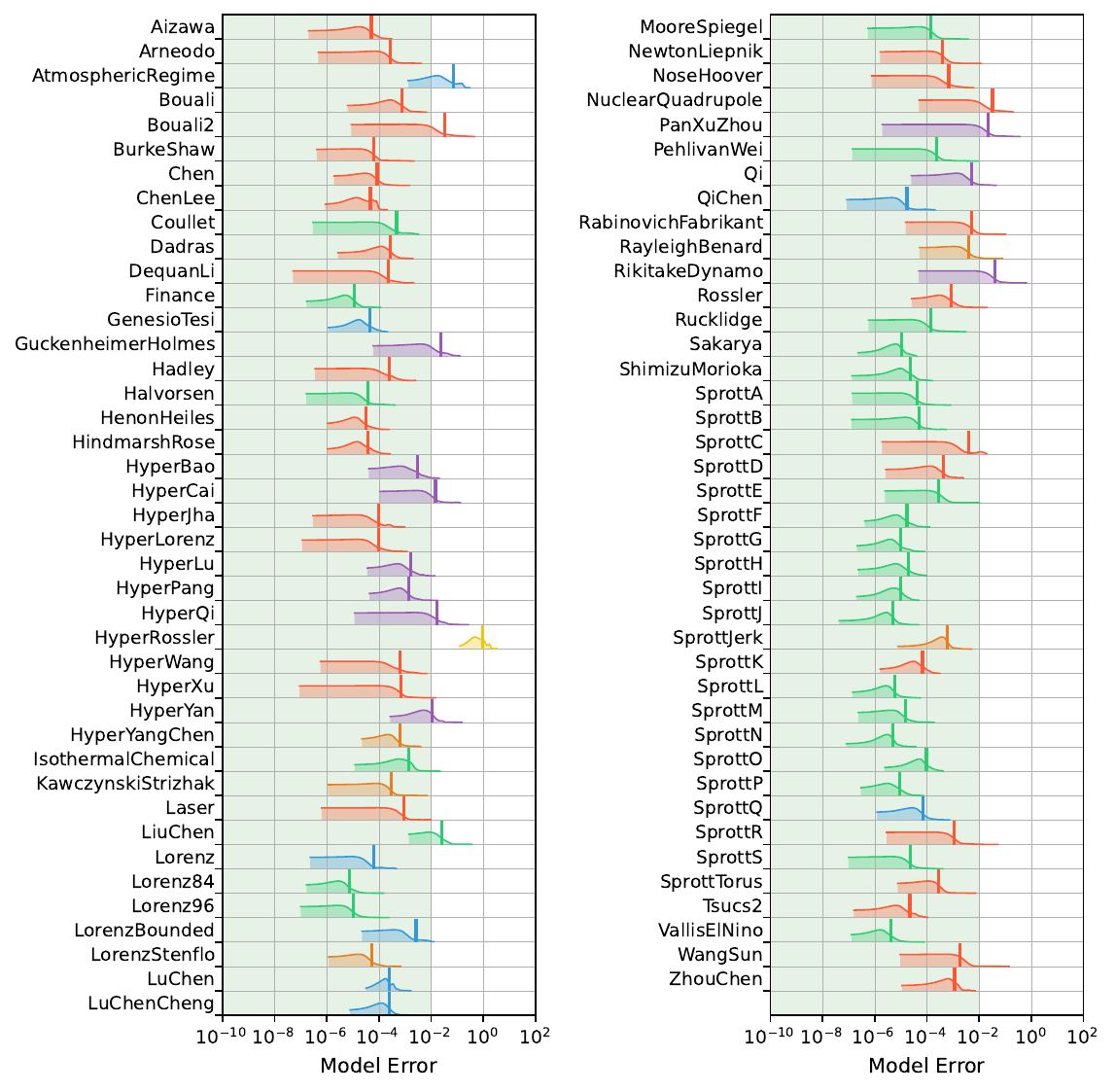}
    \vspace*{-20pt}
    \caption{Error distributions (curves) and mean errors (vertical bars) for 82 polynomial ODEs in the \texttt{dysts} library. Colors identify models of degree 
    2~({\color[HTML]{f1c40f}{$\blacksquare$}}),
    3~({\color[HTML]{9b59b6}{$\blacksquare$}}),
    4~({\color[HTML]{e67e22}{$\blacksquare$}}),
    5~({\color[HTML]{ff5733}{$\blacksquare$}}),
    6~({\color[HTML]{3498db}{$\blacksquare$}}), and
    7~({\color[HTML]{2ecc71}{$\blacksquare$}}). Green shading indicates errors smaller than~1\%.}
    \label{f:dysts-poly}
\end{figure}

\subsubsection{Results}

For each of the 109 systems in the library, we discover bounded polynomial ODE models of degree $2$ to $7$ with Lyapunov functions of degree $2$ and $4$. We then compute the relative error 
$\|\vec{f}^*(x) - \vec{f}(x)\|_2 / \|\vec{f}^*(\vec{x})\|_2$ between the exact vector field $\vec{f}^*$ and the discovered vector field $\vec{f}$ at the test points $\vec{x}$. Error distributions for the 109 models with the smallest mean error are shown in \Cref{f:dysts-poly,f:dysts-nonpoly}, where the mean error is shown as a vertical bar and colors indicate the model degree. 
The mean error is smaller than 1\% for 71 of the 82 polynomial systems and for 10 of the 27 non-polynomial ones. Thus, \methodname\ works robustly for systems governed by polynomial equations, and can be accurate for non-polynomial dynamics if their nonlinearities can be approximated well by low-degree polynomials.

\begin{figure}[t]
    \centering
    \includegraphics[width=\linewidth]{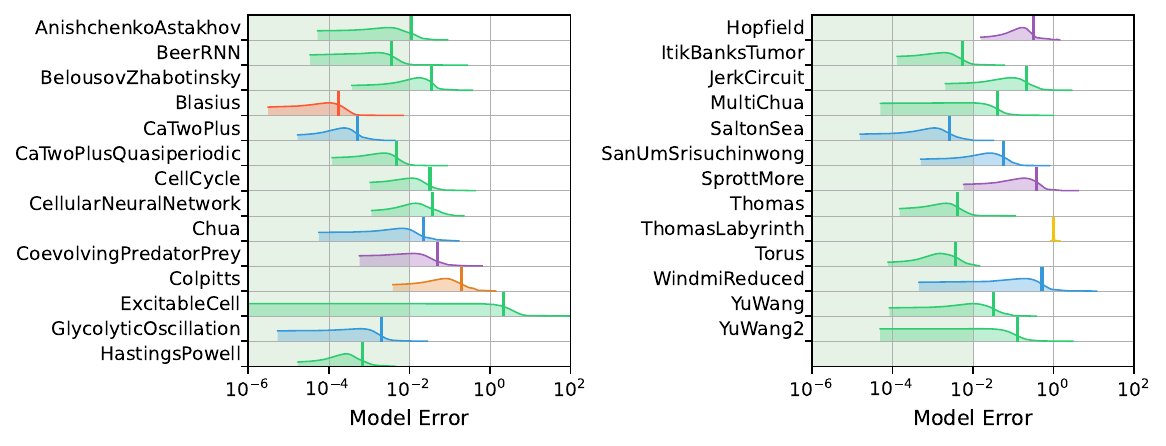}
    \caption{Error distributions (curves) and mean errors (vertical bars) for 27 non-polynomial ODEs in the \texttt{dysts} library. Colors identify models of degree 
    2~({\color[HTML]{f1c40f}{$\blacksquare$}}),
    3~({\color[HTML]{9b59b6}{$\blacksquare$}}),
    4~({\color[HTML]{e67e22}{$\blacksquare$}}),
    5~({\color[HTML]{ff5733}{$\blacksquare$}}),
    6~({\color[HTML]{3498db}{$\blacksquare$}}), and
    7~({\color[HTML]{2ecc71}{$\blacksquare$}}). Green shading indicates errors smaller than~1\%.}
    \label{f:dysts-nonpoly}
\end{figure}
\begin{figure}[t]
    \centering
    \includegraphics[width=\linewidth]{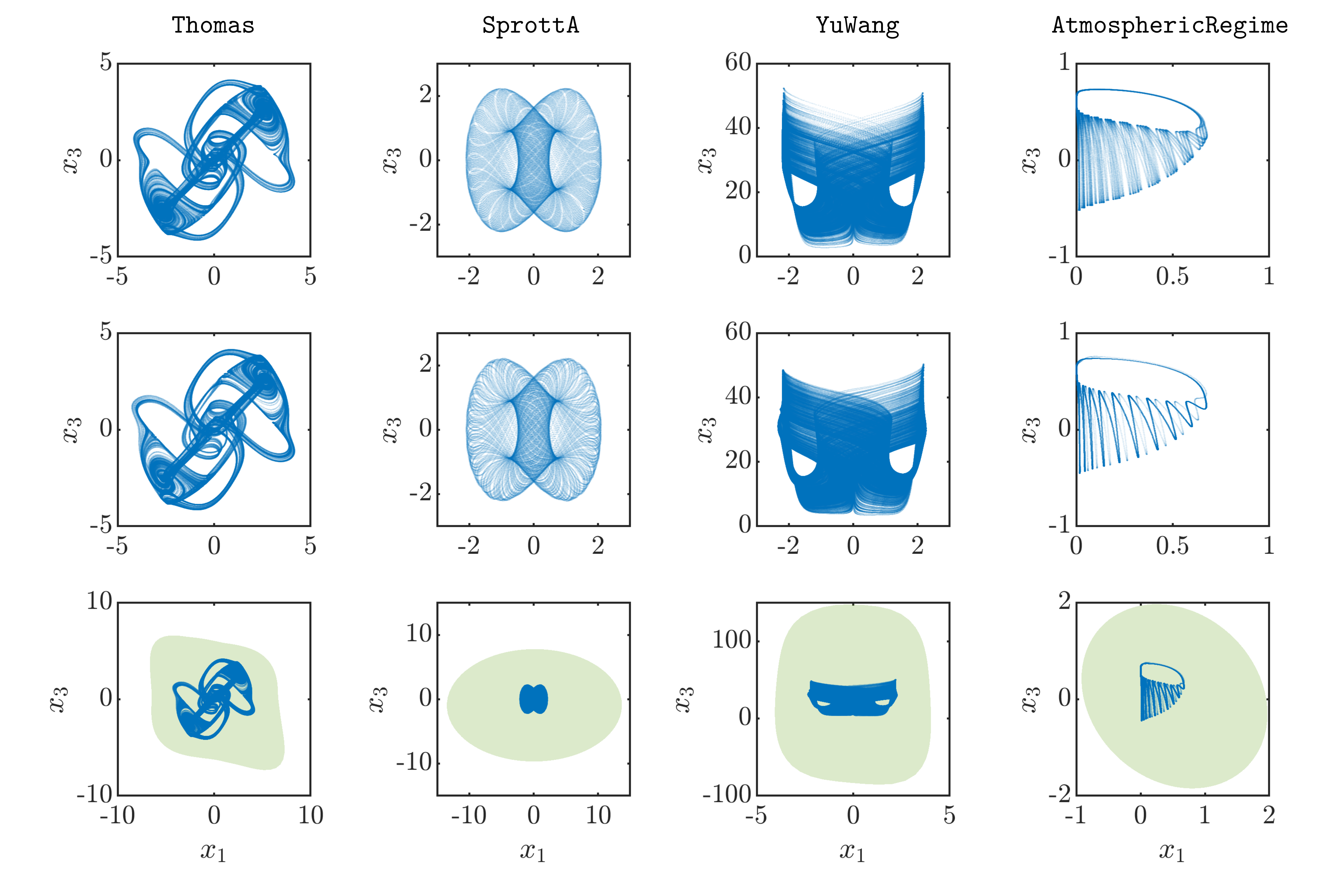}
    \vskip-15pt
    \caption{
    Attractors for selected systems from the \texttt{dysts} library. \emph{Top row:} The true attractor. \emph{Middle row:} Discovered attractors. \emph{Bottom row:} Discovered attractors and absorbing sets certifying the boundedness of the discovered ODEs.}
    \label{f:attractors}
\end{figure}

\Cref{f:attractors} compares the exact and discovered attractors for a non-polynomial system with a polynomial model with less than 1\% mean error (\texttt{Thomas}), a non-polynomial system with a polynomial model with 4\% mean error (\texttt{YuWang}), a polynomial system with no absorbing set which we approximate within 1\% with a bounded ODE (\texttt{SprottA}), and a polynomial system for which our best model achieves only a 7\% mean error (\texttt{AtmosphericRegime}). Absorbing sets guaranteeing the boundedness of the discovered models are also shown. In all cases, the discovered attractors are topologically similar to the exact ones, showing that \methodname\ can work well even when the true system dynamics are not governed by bounded polynomial ODEs. Of course, there are also cases in which our bounded ODE models are unable to accurately reproduce the true system dynamics, such as the non-polynomial systems \texttt{SprottMore}, \texttt{Hopfield}, and \texttt{WindmiReduced} (plots not included for brevity). Poor results for these systems, however, are expected because the \texttt{SprottMore} system is governed by discontinuous equations involving the sign function, while the \texttt{Hopfield} and \texttt{WindmiReduced} ODEs include hyperbolic tangent terms with sharp transitions. Neither of these nonlinearities is approximated well by degree-$7$ polynomials.

\begin{figure}[t]
    \centering
    \includegraphics[width=\linewidth]{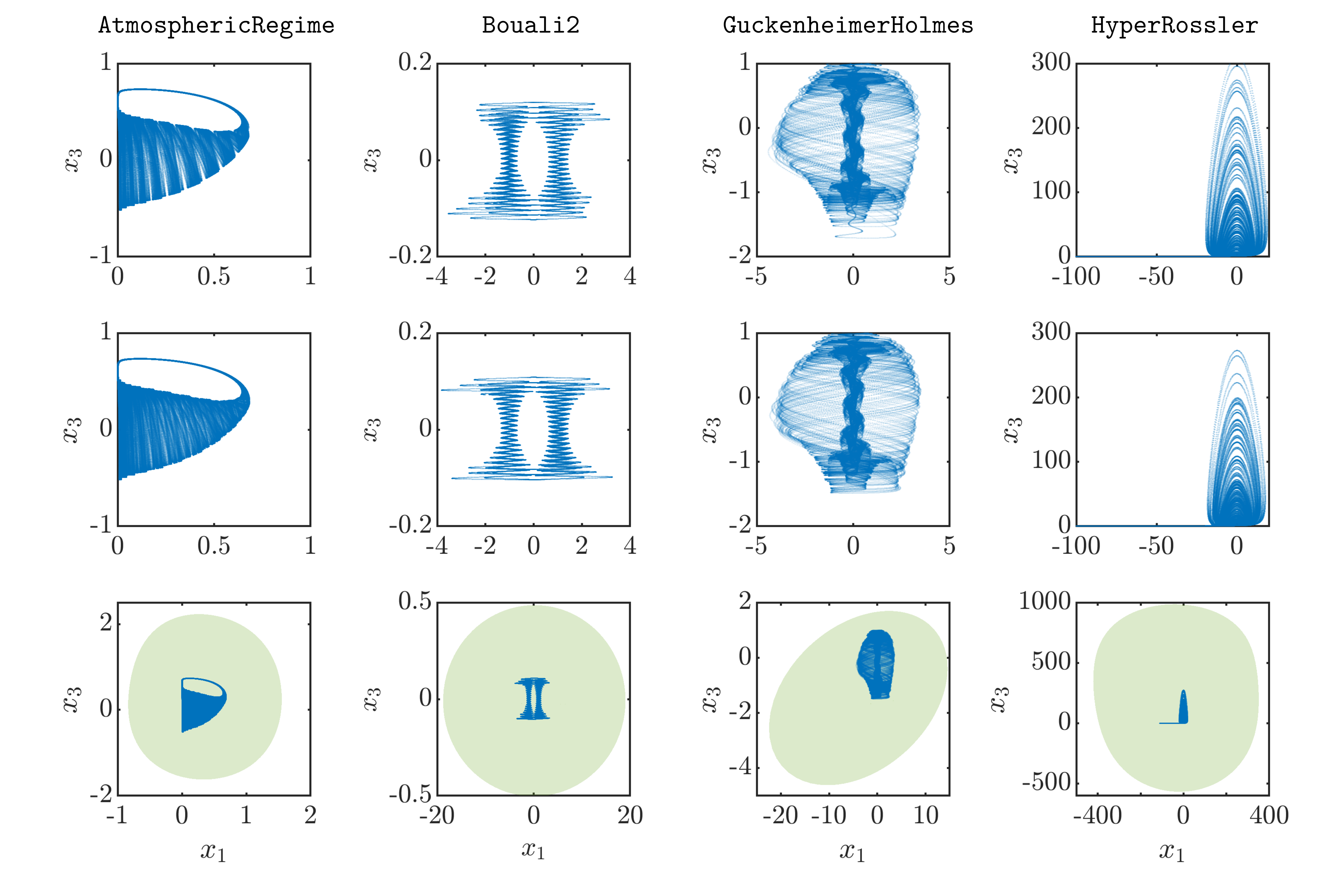}
    \vskip-15pt
    \caption{
    Attractors for selected systems from the \texttt{dysts} library, discovered with a refined dataset. \emph{Top row:} True attractors. \emph{Middle row:} Discovered attractors. \emph{Bottom row:} Discovered attractors and absorbing sets certifying the boundedness of the discovered ODEs.}
    \label{f:attractors-improved}
\end{figure}

Finally, we show that more accurate models for many systems exhibiting large errors in \Cref{f:dysts-poly} can be obtained by extending the dataset or further tuning the hyperparameters of \methodname. For example, the mean model error for the \texttt{AtmosphericRegime} system can be reduced from 7\% to 0.07\% if we refine the dataset by taking five times the amount of sequential state measurements at a reduced timestep of $0.1$ time units. As shown in \Cref{f:attractors-improved}, this two-order-of-magnitude improvement translates into a better qualitative approximation of the system's chaotic attractor with little effect on the absorbing set certifying the boundedness of the discovered model. 
Enlarging the dataset and increasing the data sampling rate leads to similar improvements for the polynomial systems \texttt{Bouali2}, \texttt{HyperRossler}, and \texttt{GuckenheimerHolmes}, resulting in provably bounded ODE models that better approximate the true chaotic attractors (see \Cref{f:attractors-improved}). We expect similar improvements for other systems, too, but a detailed system-by-system investigation is not within the scope of the present example. 

\subsection{Reduced-order models of a reaction-diffusion PDE} \label{ss:reacDiff-pde}

As our last example, we construct a reduced-order model for a one dimensional PDE model of a two-species reaction-diffusion system with intermittent bursting \cite{PhysRevLett.68.714}. The PDEs are
\begin{subequations}\label{e:rd-pde}
    \begin{align}
    r_t &= D r_{\xi\xi} + 100\bigl(s - r^2 - r^3\bigr) \label{eq:pde-u}\\
    s_t &= D s_{\xi\xi} - r + 0.01 \label{eq:pde-v}
    \end{align}
\end{subequations}
with spatial variable $\xi\in[-1,1]$, where subscripts denote partial derivatives. The solutions $r(\xi, t)$ and $s(\xi, t)$ are subject to the Dirichlet boundary conditions $r(\pm 1, t)=-2$ and $s(\pm 1, t)=-4$. Following \cite{PhysRevLett.68.714}, we fix $D = 0.1289228$. 

\subsubsection{Data generation}

To generate data, we simulate \cref{e:rd-pde} with \texttt{Dedalus} \cite{burns2020dedalus}, which uses a pseudospectral collocation method for discretization in space. We set the spatial resolution to $N_x=128$ collocation points and integrate the discretized PDE in time with the built-in Runge--Kutta scheme.
We discard the solution until time $100$ to ensure the dynamics have settled into a weakly chaotic regime with intermittent bursting. We then collect spatial `snapshots' $r(\xi,t_j)$ and $s(\xi,t_j)$ of the PDE states at times $t_j$ separated by $0.005$ time units, which will be used for training and testing. 

\subsubsection{Reduced-order modeling strategy}

To build a reduced-order model for the PDE \cref{e:rd-pde}, we first approximate
\begin{equation}\label{e:pod-expansion}
r(\xi,t) \approx \bar r(\xi) + \sum_{i=1}^n x_i(t) \phi_i(\xi)
\qquad\text{and}\qquad
s(\xi,t) \approx \bar s(\xi) + \sum_{i=1}^n x_i(t) \psi_i(\xi),
\end{equation}
and then learn an ODE $\dot{\vec{x}}=\vec{f}(\vec{x})$ for the vector $\vec{x} = (x_1,\ldots,x_n)$ of approximation coefficients.
In the approximation \cref{e:pod-expansion}, $\bar{r}$ and $\bar{s}$ are the time averages of the snapshots in our dataset, while $\phi_1,\ldots,\phi_n$ and $\psi_1,\ldots,\psi_n$ are the $n$ principal basis functions obtained by applying the proper orthogonal decomposition (POD) \cite{Holmes1996} to our snapshot dataset after removing the temporal mean. Projecting the mean-free snapshots onto these basis functions (hereafter called `POD modes') yields measurements $\vec{x}(t_j)$ at the sampling times $t_j$, which we use to recover approximate rate-of-change measurements $y_j \approx \dot{\vec{x}}(t_j)$ through an order-8 centered difference scheme. The resulting pairs $(\vec{x}(t_j), y_j)$ constitute the dataset for \methodname.
\begin{figure}[t]
    \centering
    \includegraphics[width=0.8\linewidth]{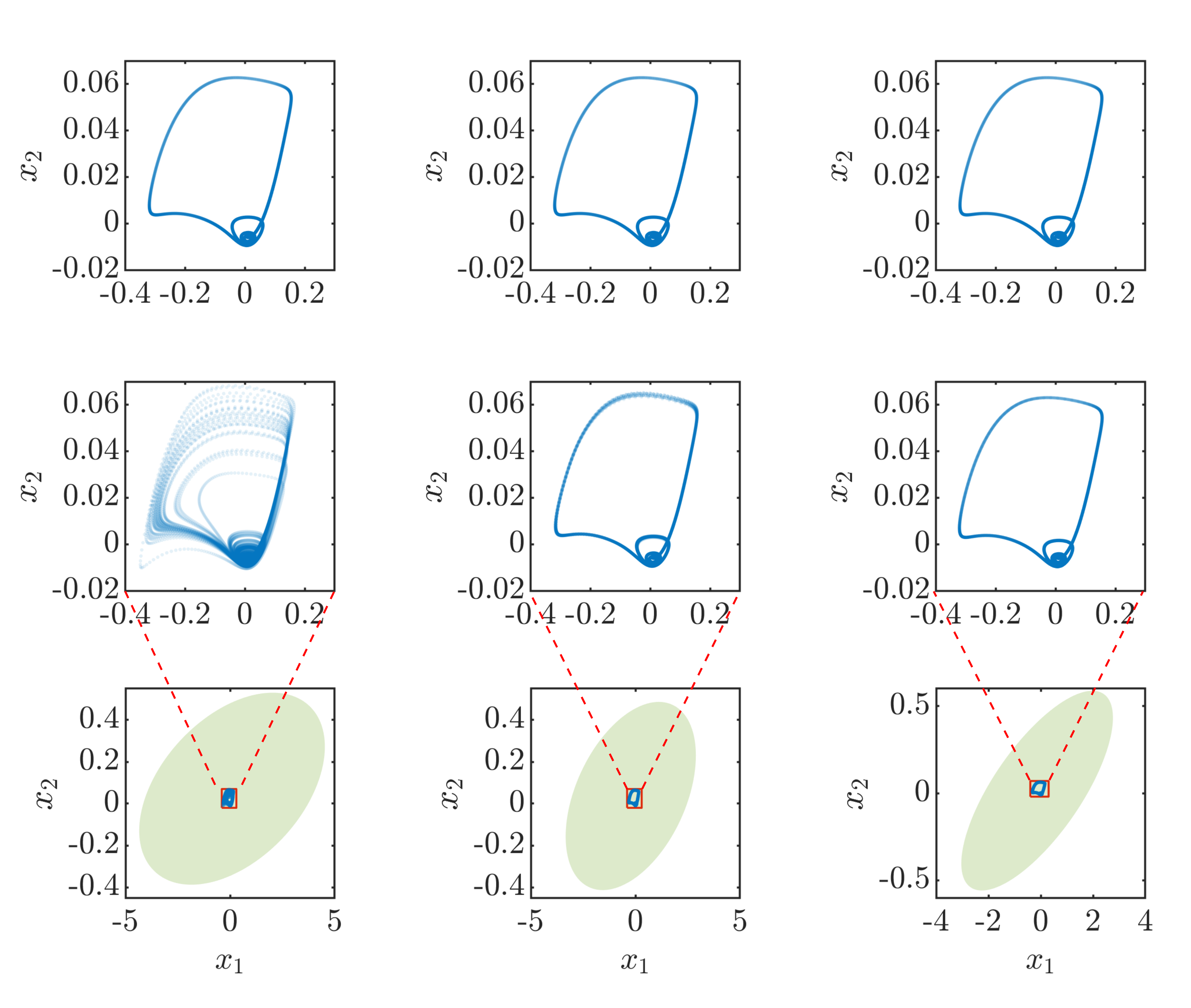}
    \caption{Modeled vs. true POD-coefficient dynamics for the PDE example in \Cref{ss:reacDiff-pde}, projected onto the $(x_1,x_2)$ plane. The reduced-order models are for $n=3$ (left), $4$ (middle), and $5$ (right) POD modes. Panels in each column show the PDE dynamics (top), the reduced-order ODE dynamics (middle), and the reduced-order ODE dynamics with their absorbing ellipsoid (bottom).} 
    \label{fig:reacDiff_trapping}
\end{figure}

\subsubsection{Results}

We use \methodname\ to discover bounded polynomial ODE models $\dot{\vec{x}}=\vec{f}(\vec{x})$ for the PDE dynamics using $n=3$, $n=4$ and $n=5$ POD modes. 
In all cases, the hyperparameters in \cref{alg:learn-ode} are fixed to 
$\alpha = 1$, 
$\beta = 10^6$, 
$\kappa = 10$, 
$\varepsilon_1 = \varepsilon_2 = \varepsilon_3 = 0.01$, 
$\varepsilon_4 = 10^{-6}$, and
$K=1$.
The matrix $\mat{\Lambda}$ and the vector $\vec{\mu}$ are selected with the strategy described in \cref{ss:dysts-hyperparams}. We vary the polynomial degrees in the range $d_f\in\{2,\ldots,7\}$ while keeping the Lyapunov function degree fixed to $d_v=2$. This choice is justified because, as shown in \cref{app:pde-boundedness}, the PDE \cref{e:rd-pde} has an absorbing ellipsoid in the Lebesgue space $L^2(-1,1)\times L^2(-1,1)$. 
For $n = 3$ and $n = 4$, we use $25\,000$ snapshots for training and $20\,000$ snapshots for testing. For $n = 5$, instead, we use 10 times more data for the training set and reduce the hyperparameter $\beta$  to $100$ (models trained on a smaller dataset do not reproduce the intermittent bursting for our hyperparameter choices).
\begin{figure}[t]
    \centering
    \includegraphics[width=0.95\linewidth]{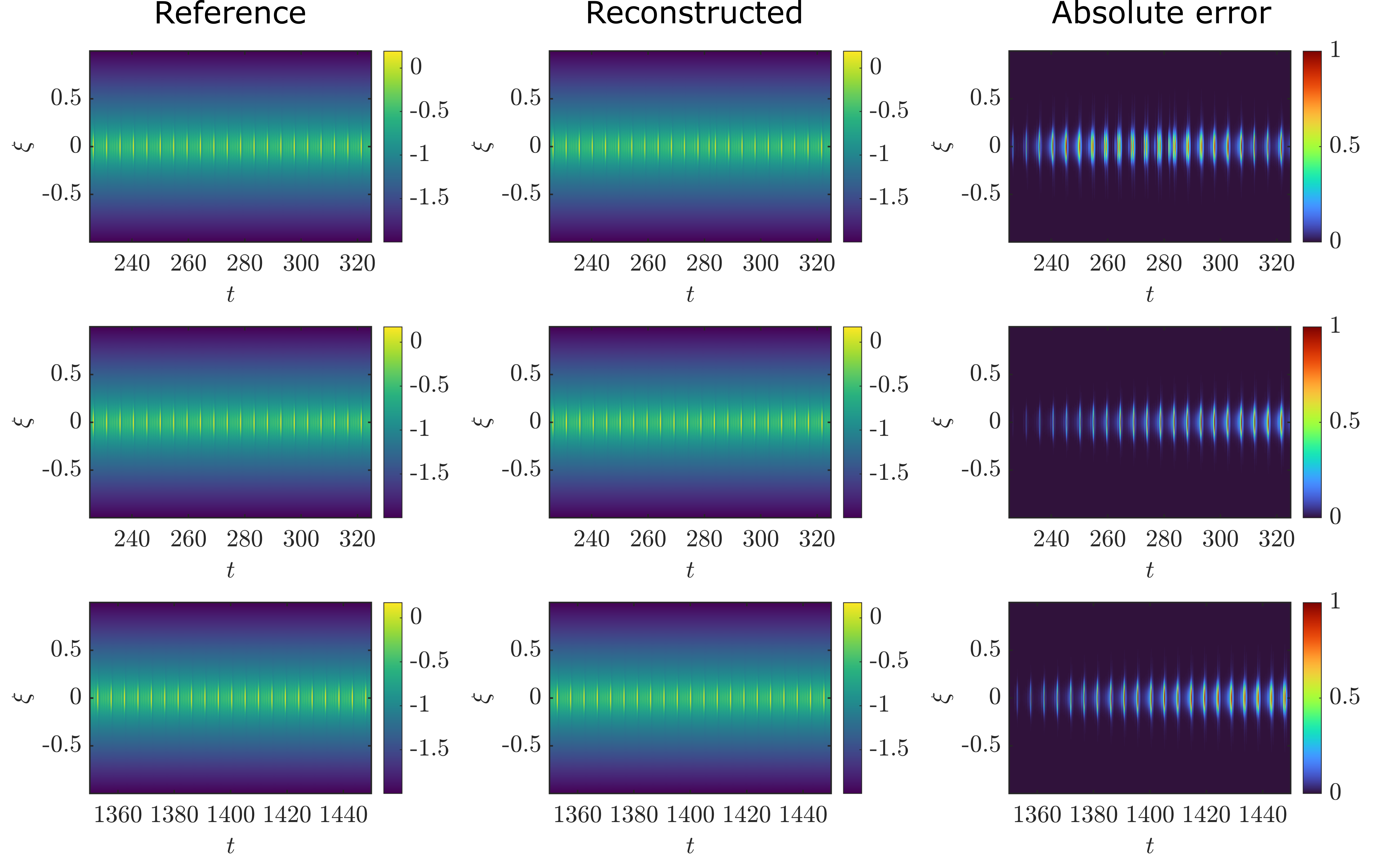} 
    \caption{Reconstructed vs.\ true $r$-component of the PDE solution in \Cref{ss:reacDiff-pde}, obtained from reduced-order models with $n=3$ (top), $4$ (middle), and $5$ (bottom) POD modes. In each row, panels show the reference PDE simulation (left), the reconstruction from the learned ODE model (middle), and the absolute error between the two (right).}
    \label{fig:reacDiff_u}
\end{figure}
The ODEs that best reproduce the intermittent dynamics of \Cref{e:rd-pde} are obtained with $d_f = 5$ for $n=3$, with $d_f=5$ for $n=4$, and with $d_f=3$ for $n=5$. \Cref{fig:reacDiff_trapping} compares trajectories of these ODEs to projections of the PDE dynamics onto the POD modes. The absorbing ellipsoid certifying the boundedness of the discovered ODE models is also shown for illustration. Clearly, there is a good qualitative agreement between the modeled and the true dynamics throughout the test window. This conclusion is corroborated by \Cref{fig:reacDiff_u}, where space-time approximations of the PDE state $r(\xi, t)$ recovered from our reduced-order ODE model are compared to the original PDE simulation data (Results for the PDE state $s(\xi, t)$ are similar and are not reported for brevity.) The agreement between reduced-order model and simulations is best with $n=4$ POD modes, but in all cases our bounded models capture the intermittent bursting of the PDE dynamics for the entire test window ($100$ time units). Thus, \methodname\ is able to produce provably bounded reduced-order models that reproduce the PDE's nontrivial attractor dynamics over relatively long time horizons. 
\section{Conclusion}\label{s:conclusion}

We presented \methodname, a data-driven framework for discovering polynomial ODEs with provably bounded trajectories in which boundedness is certified by polynomial Lyapunov functions characterizing compact absorbing sets. Concretely, \methodname\ identifies bounded polynomial ODEs and the associated Lyapunov functions by finding a feasible solution of a well-posed nonconvex optimization problem. The computations rely on three key ingredients: sum-of-squares techniques from polynomial optimization to enforce Lyapunov constraints, a block-coordinate scheme with convex subproblems that circumvents the nonconvex coupling between the ODE and Lyapunov parameters, and a novel initialization that infers a candidate Lyapunov function from data to ensure the feasibility of the block-coordinate scheme. Experiments on more than 100 nonlinear systems show that \methodname\ recovers accurate bounded ODE models for many low-dimensional systems with nontrivial dynamics, including some with non-polynomial governing equations or exhibiting blowup for selected initial conditions.

Our computational approach is highly flexible and allows for polynomial ODEs and Lyapunov functions of arbitrary degree. The main bottleneck is the solution of optimization problems with sum-of-squares polynomials, which have the same number of variables as the ODEs and whose degree increases with the ODE and Lyapunov function degree. For the examples in \cref{s:numerics}, which have up to six variables and sum-of-squares degree at most 12, computations complete within minutes on a laptop. The computational complexity, however, deteriorates quickly with state-space dimension and degree, so \methodname\ is currently not well-suited to discovering high-dimensional, high-degree ODEs. On the other hand, for high-dimensional systems with low-dimensional intrinsic dynamics, reduced-order models are within reach. In the PDE example of \cref{ss:reacDiff-pde} we found such models after projecting the PDE dynamics onto a set of POD modes. Further work should explore if better low-dimensional embeddings can be found with autoencoders, which have already been used to learn dynamics without boundedness constraints~\cite{Bramburger2021}. 

The solution of problem \cref{eq:monster} poses further computational questions beyond the issue of scalability to high-dimensional and high-degree polynomial dynamics. First, while our block-coordinate optimization scheme generates feasible iterates with non-increasing objective values, we have no proof that these iterates converge to a local minimizer, even along subsequences. We wonder if existing results for block-coordinate optimization algorithms (see, e.g., \cite{Bertsekas1999,Grippo1999,Beck2013} and references therein) can be adapted to \cref{eq:monster}. Second, it is worth investigating if this problem can be solved efficiently by recent algorithms for nonconvex optimization over sum-of-squares polynomials \cite{Cunis2025}.
The availability of computational methods that directly handle nonconvexity would not only ensure convergence to a locally optimal model, but would also allow for further extensions of \methodname. First, instead of penalizing errors in the ODE's vector field in the objective of~\cref{eq:monster}, one could penalize errors in the measured state through a `flow map' formulation, which avoids rate-of-change measurements and is less sensitive to measurement noise~\cite{GOYAL2025134893,Goyal2022}. Second, nonconvex optimization algorithms would enable us to discover bounded models for discrete-time dynamics. Indeed, the discrete-time setting does not allow for block-coordinate optimization schemes with convex subproblems because the discrete-time version of the Lyapunov inequality \cref{e:lyapunov-inequality} involves the composition $v \circ f$, which is nonconvex in the dynamics map $f$ even if the Lyapunov function $v$ is held fixed.

Finally, a natural question concerns the approximation capabilities of bounded polynomial ODEs endowed with polynomial Lyapunov functions. Specifically, can a smooth ODE with a compact absorbing set be approximated uniformly on that set by a polynomial ODE whose boundedness is certified by a polynomial Lyapunov function? If so, can our \methodname\ framework recover such approximations? Further, how do the approximation errors scale with the degree of the vector field, the degree of the Lyapunov polynomial, and the size and distribution of the training dataset? Addressing these questions would provide further theoretical foundations for \methodname\ and related bounded ODE discovery frameworks.

% Acknowledgments
\subsection*{Acknowledgments}
AA was funded by the European Union's Horizon Europe MSCA project ModConFlex (grant number 101073558). GF was partially supported by the DFG-ANR project MONET (DFG grant number 568735456). We thank Torbjørn Cunis for bringing recent methods for nonconvex SOS programming to our attention.

% Bibliography
\bibliographystyle{abbrvnat}
\bibliography{refs}

% Appendix
\appendix
\crefalias{section}{appendix}
\crefalias{subsection}{appendix}
\section{Absorbing sets for the ODE in \texorpdfstring{\cref{ss:ex3}}{Section \ref{ss:ex3}}}
\label{app:analyticalEx3}
We show that the two-dimensional ODE $\dot{\vec{x}} = \vec{f}^*(\vec{x})$ with $f^*$ given by \cref{eq:ex3} has a compact absorbing set described by a quartic polynomial Lyapunov function, but no absorbing ellipsoid. To lighten the notation, we write $f$ instead of $f^*$.
\begin{proposition}
    If $f$ is the vector field in \cref{eq:ex3}, the polynomial
    $v(x_1,x_2)
    =
    x_1^2+x_2^2+\frac13 \left( x_1^4 + x_1^3 x_2 - x_1 x_2^3 + x_2^4\right)
    $
    satisfies the assumptions of \Cref{th:lyap-fun} with $\alpha=1$ and sufficiently large $b$.
\end{proposition}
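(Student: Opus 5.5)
The plan is to check the two hypotheses of \cref{th:lyap-fun} separately: that $v$ is coercive, and that the Lyapunov inequality \cref{e:lyapunov-inequality} holds with $\alpha=1$ once $b$ is large enough. Both reduce to sign conditions on binary homogeneous forms, which I will dehomogenize with $t=x_2/x_1$ and handle by elementary one-variable estimates.

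\emph{Coercivity.} Write $v=x_1^2+x_2^2+q$, where $q=\tfrac13(x_1^4+x_1^3x_2-x_1x_2^3+x_2^4)$. It suffices to show $q\ge 0$, since then $v(\vec{x})\ge\|\vec{x}\|^2$. For $x_1\neq0$ we have $3q=x_1^4\,(1+t^4+t(1-t^2))$.
\begin{itemize}
\item If $|t|\le1$, then $|t(1-t^2)|\le 2/(3\sqrt3)<1$, so the bracket is positive.
\item If $|t|>1$, then $|t(1-t^2)|=|t|^3-|t|$, and $1+t^4-|t|^3+|t|>0$ because $t^4\ge|t|^3$.
\end{itemize}
The case $x_1=0$ gives $3q=x_2^4\ge0$. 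Hence $v$ is coercive.

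\emph{Lyapunov inequality.} Split $f=f_1+f_3$ into its linear part $f_1=(x_1-x_2,\,x_1+x_2)$ and cubic part $f_3=(-x_1x_2^2+x_2^3,\,-x_1^2x_2-x_1^3)$. Likewise $\nabla v=2\vec{x}+\nabla q$, where $\nabla q$ is cubic. Then $\ip{f}{\nabla v}$ contains only homogeneous terms of degrees $2$, $4$ and $6$; there are no odd-degree terms. The key step is to compute the sextic part $h_6=\ip{f_3}{\nabla q}$ and show it is negative definite. Expanding, I expect
\begin{equation*}
3h_6=-\bigl(x_1^6+x_1^5x_2+x_1^4x_2^2+x_1^2x_2^4-x_1x_2^5+x_2^6\bigr).
\end{equation*}
After dehomogenizing, the bracket becomes $x_1^6\bigl[(1+t+t^2)+t^4(1-t+t^2)\bigr]$. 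Both quadratics in this expression are at least $3/4$, so the bracket is at least $\tfrac34 x_1^6>0$ for $x_1\neq0$. For $x_1=0$ it equals $x_2^6$. By homogeneity and compactness of the unit circle, it follows that $h_6(\vec{x})\le -c\|\vec{x}\|^6$ for some $c>0$.

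Consequently
\begin{equation*}
v+\ip{f}{\nabla v}\le -c\|\vec{x}\|^6+C(1+\|\vec{x}\|^4)
\end{equation*}
for some constant $C$, and the right-hand side is bounded above on $\R^n$. Taking $b$ at least this upper bound gives $b-v-\ip{f}{\nabla v}\ge0$, which is \cref{e:lyapunov-inequality} with $\alpha=1$.

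The main obstacle is the algebra for $h_6$. One must verify the cancellation of the $x_1^3x_2^3$ coefficient and the signs of the remaining terms, since negative definiteness of $h_6$ is exactly what makes the argument work. If the sextic form turned out not to be definite, I would fall back on an explicit SOS decomposition of $b-v-\ip{f}{\nabla v}$ for a concrete $b$ (for instance $b=15$), as done numerically in the paper.
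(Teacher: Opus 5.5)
Your proof is correct and follows the same route as the paper. You first show that the quartic part of $v$ is nonnegative, so that $v\ge \|\vec{x}\|^2$; the paper does this with Young's inequality, while you use a case split on $|t|\le 1$. You then show that the sextic leading form of $b-v-\ip{f}{\nabla v}$ is positive definite, and your splitting $(1+t+t^2)+t^4(1-t+t^2)$ is just a regrouping of the paper's decomposition $\tfrac16x_1^4(x_1+x_2)^2+\tfrac16x_2^4(x_1-x_2)^2+\tfrac16(x_1^4+x_2^4)(x_1^2+x_2^2)$. Your expansion of $h_6$ checks out, and you also spell out why a definite leading form gives a finite $b$, which the paper leaves implicit.
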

\begin{remark}
    We do not determine the value of $b$ for brevity, but one can verify that $b=15$ works using the SOS optimization techniques described in \cref{ss:model-based-lyap}.
\end{remark}
\begin{proof}
    Using the inequality $\smash{|ab| \leq \frac14|a|^4 + \frac34|b|^{4/3}}$ we can estimate $x_1 x_2^3 \geq - \frac14 x_1^4 - \frac34 x_2^4$ and $x_2 x_1^3  \geq - \frac34 x_1^4 - \frac14 x_2^4$. We then conclude that $v(\vec{x})\geq x_1^2 + x_2^2$, so $v$ is coercive.
    
    To prove that $v$ satisfies the Lyapunov inequality \cref{e:lyapunov-inequality} for $\alpha=1$ and a suitably large $b$, it suffices to show that the leading-order homogeneous part of the polynomial $b -v-\ip{\vec{f}}{\nabla v}$ is coercive.
    A straightforward computation gives
    $$
    b -v - \ip{\vec{f}}{\nabla v} = \frac13 \left( x_1^6 + x_1^5 x_2 + x_1^4 x_2^2 + x_1^2 x_2^4 - x_1 x_2^5 + x_2^6 \right) + \text{lower-degree terms}.
    $$
    The degree-$6$ leading homogeneous part is seen to be coercive upon rewriting it as
    % \begin{equation*}
    %       \frac16 \, x_1^4  \left( x_1 + x_2 \right)^2 
    %     + \frac16 \, x_2^4 \left( x_1 - x_2 \right)^2
    %     + \frac16 \left( x_1^4 + x_2^4 \right) \left( x_1^2 + x_2^2 \right).
    % \end{equation*}
    $\frac16 \, x_1^4  \left( x_1 + x_2 \right)^2 
        + \frac16 \, x_2^4 \left( x_1 - x_2 \right)^2
        + \frac16 \left( x_1^4 + x_2^4 \right) \left( x_1^2 + x_2^2 \right).
    $
    The proof is complete.
\end{proof}
\begin{proposition}
    If $f$ is the vector field in \cref{eq:ex3}, there exist no $\alpha>0$, $b\in \R$, and coercive quadratic polynomial $v$ satisfying the Lyapunov inequality \cref{e:lyapunov-inequality}.
\end{proposition}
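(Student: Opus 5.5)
The plan is to argue by contradiction, looking only at the leading-order homogeneous part of the Lyapunov inequality along rays. Suppose $\alpha>0$, $b\in\R$ and a coercive quadratic $v$ satisfy \cref{e:lyapunov-inequality}. First I would write $v(\vec{x}) = \vec{x}^\top P \vec{x} + \ip{\vec{q}}{\vec{x}} + r$ with $P$ symmetric. Coercivity forces $P$ to be positive definite: if $\vec{z}^\top P \vec{z}\le 0$ for some $\vec{z}\neq 0$, then $v$ restricted to the line $s\vec{z}$ is at most linear in $s$ and cannot tend to $+\infty$ in both directions. Write $P=\begin{pmatrix} a & c\\ c & d\end{pmatrix}$ with $a,d>0$.

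Next, split $f = f_1 + h$, where $f_1$ is the linear part and $h(\vec{x}) = (-x_1x_2^2 + x_2^3,\; -x_1^2x_2 - x_1^3)$ is the cubic part. Since $\nabla v = 2P\vec{x} + \vec{q}$, the polynomial $b - v - \alpha\ip{f}{\nabla v}$ has degree at most $4$, and its quartic homogeneous part is $-2\alpha\ip{h(\vec{x})}{P\vec{x}}$. If this quartic form were positive at some $\vec{x}_0$, then evaluating the inequality at $s\vec{x}_0$ would give $s^4$ times a negative number plus $O(s^3)$, which is negative for large $s$. That contradicts \cref{e:lyapunov-inequality}. Hence it suffices to show that $Q(\vec{x}) := \ip{h(\vec{x})}{P\vec{x}}$ takes a positive value for every positive definite $P$.

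A direct expansion gives
\begin{equation*}
Q(\vec{x}) = -c x_1^4 - (a+d)x_1^2x_2^2 + c x_2^4 + (a-c)x_1x_2^3 - (c+d)x_1^3x_2 .
\end{equation*}
I would first evaluate at $(1,0)$ and $(0,1)$, which give $-c$ and $c$. If $c\neq0$, one of these values is positive and we are done. If $c=0$, then $Q(1,t) = -dt - (a+d)t^2 + a t^3$. Taking $t<0$ with $|t|$ small makes $Q(1,t) = d|t| + O(t^2) > 0$, because $d>0$. In either case $Q$ is positive somewhere, which yields the contradiction.

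The only delicate step is the leading-order reduction, namely checking that the quartic part of the Lyapunov polynomial comes solely from $\ip{h}{2P\vec{x}}$. This holds because $v$ contributes only quadratic terms, and the terms $\ip{f_1}{\nabla v}$ and $\ip{h}{\vec{q}}$ have degree at most $3$. After that, the argument is an elementary case check on the sign of $c$, using positivity of the diagonal entries of $P$.
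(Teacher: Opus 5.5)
Your proof is correct, and its core matches the paper's. Up to the factor $-2\alpha$, your values $Q(1,0)=-c$ and $Q(0,1)=c$ are the coefficients of $x_1^4$ and $x_2^4$ that the paper uses to force $B=0$; in its notation, $c=B/2$. Your case $c=0$ uses the same $x_1^3x_2$ term, whose coefficient is proportional to the positive diagonal entry of $P$, that the paper invokes when it notes the polynomial is cubic in $x_1$ for fixed $x_2\neq 0$.

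The difference is in the reduction. The paper first symmetrizes: since $f$ is odd, it replaces $v$ by $\tfrac12 v(\vec{x})+\tfrac12 v(-\vec{x})$, which removes the linear part of $v$ and leaves four coefficients. It then reads the obstruction off the fully expanded Lyapunov polynomial, lower-order terms included. You instead use the necessary condition that the top-degree homogeneous part of a globally nonnegative polynomial must be nonnegative. Since $\vec{q}$ enters only in degree at most three, the symmetry step becomes unnecessary and the computation reduces to the single quartic form $\ip{h(\vec{x})}{P\vec{x}}$. Your version is slightly shorter and does not use the equivariance of $f$. It therefore applies unchanged to any polynomial vector field of degree at most three with the same cubic part $h$, whereas the paper's symmetrization requires $f$ to be odd.

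One wording slip: ``if this quartic form were positive'' should refer to $\ip{h(\vec{x}_0)}{P\vec{x}_0}>0$, not to $-2\alpha\ip{h(\vec{x}_0)}{P\vec{x}_0}$. The rest of your argument uses the correct sign.
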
 
\begin{proof}
    For the sake of contradiction, suppose that $v$ is a quadratic polynomial such that the Lyapunov inequality \cref{e:lyapunov-inequality} holds for some $\alpha>0$ and $b\in\R$.
    Since the vector field $\vec{f}$ in \cref{eq:ex3} is equivariant under the symmetry transformation $x\mapsto -x$, a standard symmetry reduction argument shows that $\frac12 v(\vec{x})+ \frac12v(-\vec{x})$ also satisfies \cref{e:lyapunov-inequality} for the same values of $\alpha$ and $b$. We may therefore assume without any loss of generality that $v(-\vec{x})=v(\vec{x})$, so it takes the form
    $v(x) = A x_1^2 + B x_1 x_2 + C x_2^2 + D$ 
    for some coefficients $A$, $B$, $C$ and $D$. We must also have $A,C>0$ because $v$ is coercive by assumption.
    
    Upon substituting this quadratic polynomial $v$ and the vector field $\vec{f}$ from \cref{eq:ex3} into the Lyapunov inequality \cref{e:lyapunov-inequality} we find that
    \begin{align*}
        0\leq 
        b - D
        - (A + B\alpha +2A\alpha)x_1^2 
        - (B+2B\alpha-2A\alpha+2C\alpha)x_1 x_2
        \\
        - (C - B\alpha +2C\alpha) x_2^2 
        + B\alpha x_1^4 
        + (B + 2C)\alpha x_1^3 x_2
        \\
        + 2(A+C)\alpha x_1^2 x_2^2
        + (B - 2A)\alpha x_1 x_2^3
        - B \alpha x_2^4
    \end{align*}
    for all $\vec{x}=(x_1,x_2)\in\R^2$.
    This means the coefficients of the monomials $x_1^4$ and $x_2^4$ must be nonnegative, so $B=0$. We then conclude that
    \begin{align*}
        0\leq 
        b - D
        - A(1 +2\alpha)x_1^2 
        - 2(C-A)\alpha x_1 x_2
        - C(1 +2\alpha) x_2^2 \\
        + 2C\alpha x_1^3 x_2
        + 2(A+C)\alpha x_1^2 x_2^2
        - 2A\alpha x_1 x_2^3
    \end{align*}
    for all $\vec{x}=(x_1,x_2)\in\R^2$. Since $C\alpha>0$, however,
    this inequality cannot be true because the polynomial on its right-hand side is cubic in $x_1$ for fixed $x_2\neq 0$, so it is not sign-definite.
\end{proof}
\section{Absorbing ellipsoid for the PDE \texorpdfstring{\cref{e:rd-pde}}{(\ref{e:rd-pde})}}
\label{app:pde-boundedness}
We now show that the PDE~\cref{e:rd-pde} has an absorbing ellipsoid in its state space $\mathcal{X} := L^2(-1,1) \times L^2(-1,1)$.
\begin{proposition}
   Let $v(r,s) = \int_{-1}^1 (r+2)^2 + 100 (s+4)^2 \,d\xi$. There exists $b \in \R$ such that the set $\{(r,s)\in \mathcal{X}: v(r,s)\leq b\}$ is bounded and absorbs solutions of the PDE~\cref{e:rd-pde}.
\end{proposition}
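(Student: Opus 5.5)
The plan is to follow the proof of \cref{th:lyap-fun} in this infinite-dimensional setting. We first shift the variables so that the boundary conditions become homogeneous. Then we show that $v$ satisfies a differential inequality $\ddt v \le b_0 - \gamma v$ along solutions, for some constants $\gamma>0$ and $b_0\ge 0$, and conclude with Gr\"onwall's inequality. Set $R=r+2$ and $S=s+4$, so that $R(\pm1,t)=S(\pm1,t)=0$ and $v=\|R\|_{L^2}^2+100\|S\|_{L^2}^2$. Then $\{v\le b\}$ is an ellipsoid, translated by $(-2,-4)$, in $\mathcal{X}$, so it is clearly bounded for every $b$. In the new variables the PDE \cref{e:rd-pde} reads
\begin{equation*}
R_t = D R_{\xi\xi} + 100\bigl(S-4-(R-2)^2-(R-2)^3\bigr),
\qquad
S_t = D S_{\xi\xi} - R + 2.01 .
\end{equation*}

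Next we compute $\ddt v = 2\int R R_t\,d\xi + 200\int S S_t\,d\xi$ and integrate the diffusion terms by parts using the homogeneous boundary conditions. This gives
\begin{equation*}
\ddt v = -2D\|R_\xi\|^2 - 200D\|S_\xi\|^2 + 200\!\int\! RS - 200\!\int\! RS + 402\!\int\! S - 200\!\int\! \bigl[4R + R(R-2)^2 + R(R-2)^3\bigr]d\xi .
\end{equation*}
The weight $100$ in $v$ is chosen precisely so that the bilinear coupling terms $\pm 200\int RS$ cancel. This is the PDE analogue of the `lossless' structure in \cref{rem:lossless-nnl}. The reaction terms in $R$ combine to $-200\,g(R)$, where $g(R)=4R+R(R-2)^2(R-1)$ is a quartic with positive leading coefficient. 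Hence there is a constant $C_1$ with $-200\,g(R)\le C_1 - 2R^2$ pointwise. Integrating this bound over $[-1,1]$ controls the $R$-part of $v$.

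The main obstacle is the $S$-component. Its equation has no damping term, so all the dissipation must come from diffusion. Here I would use the Poincar\'e inequality on $(-1,1)$ with Dirichlet conditions, $\|S_\xi\|^2 \ge \frac{\pi^2}{4}\|S\|^2$. This gives $-200D\|S_\xi\|^2 \le -50\pi^2 D\|S\|^2$, a strictly negative multiple of $\|S\|^2$ because $D>0$. The remaining linear term $402\int S$ is absorbed by Young's inequality, $402\int S \le 25\pi^2 D\|S\|^2 + C_2$. We drop $-2D\|R_\xi\|^2\le 0$. Collecting terms yields $\ddt v \le 2C_1 + C_2 - 2\|R\|^2 - 25\pi^2 D\|S\|^2 \le b_0 - \gamma v$ with $\gamma=\min\{2, \pi^2 D/4\}>0$.

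Finally, Gr\"onwall's inequality gives $v(t)\le b_0/\gamma + (v(0)-b_0/\gamma)e^{-\gamma t}$. Choosing any $b> b_0/\gamma$, the set $\{v\le b\}$ is entered in finite time by every trajectory and is forward invariant, so it is absorbing. This is even slightly stronger than the limiting statement \cref{e:absorbing-set} and needs no compactness argument. The only technical caveat is justifying the energy computation, which requires enough regularity of solutions for the integration by parts. I would handle this by working with (Galerkin approximations of) strong solutions, which is standard for such semilinear parabolic systems with dissipative cubic nonlinearity, and would not dwell on it.
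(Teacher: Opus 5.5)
Your proof is correct and follows essentially the paper's route. Both arguments use the same energy computation, in which the $rs$ cross terms cancel thanks to the weight $100$; a Poincar\'e inequality to extract dissipation from diffusion in the undamped $s$-equation; a lower bound on the quartic reaction term; and a Gr\"onwall-type Lyapunov argument.

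The differences are only in the details:
\begin{itemize}
\item The paper applies the non-sharp Poincar\'e bound $\int_{-1}^1 (r+2)^2\,d\xi \le 2\int_{-1}^1 |r_\xi|^2\,d\xi$ to both components, which forces $\alpha>1/D$. You instead control $\|R\|^2$ with the reaction term and use Poincar\'e only on $S$.
\item For the final lower bound the paper appeals to the calculus of variations. You make it explicit with the sharp constant $\pi^2/4$ and Young's inequality.
\item You obtain finite-time entry into $\{v\le b\}$. This sidesteps the compactness step in the proof of \cref{th:lyap-fun}, which does not transfer verbatim to $L^2$.
\end{itemize}
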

\begin{proof}
    We use an infinite-dimensional analogue of \cref{th:lyap-fun}, in which the state space $\R^n$ is replaced by the function space $\mathcal{X}$ and the term $\ip{f}{\nabla v}$ in the Lyapunov inequality is replaced by the derivative $\mathcal{L}v$ of the Lyapunov function $v$ along PDE solutions. Specifically, it suffices to show that our Lyapunov function $v$ is coercive on $\mathcal{X}$ and that the Lyapunov inequality $b - v(r,s) - \alpha \mathcal{L}v(r,s) \geq 0$ holds for all functions $(r,s)\in\mathcal{X}$ for a suitable choice of constants $b$ and $\alpha>0$.
    
    The coercivity of $v$ in $\mathcal{X}$ is clear from its definition.  For the Lyapunov inequality, we first compute $\mathcal{L}v$ by differentiating the map $t\to v(r(\cdot, t), s(\cdot, t))$ in time along solutions of the PDE \cref{e:rd-pde}. A straightforward computation using integration by parts and the boundary conditions $r(\pm1) = -2$ and $s(\pm1)=-4$ gives
    \begin{align*}
        \mathcal{L}v(r,s) 
        &=
        \int_{-1}^1 2(r+2) \, r_t + 200\, (s+4)  \, s_t \,\de \xi\\
        &=
        \int_{-1}^1 - 2D |r_\xi|^2 - 200(4r +2r^2 + 3r^3 + r^4) - 200 D |s_\xi|^2 + 402 s + 8 \, \de\xi.
    \end{align*}
    We then search for $\alpha>0$ and $b\in\R$ such that the Lyapunov inequality
    \begin{align}\label{e:lyap-pde-explicit}
        0 
        \leq 
        b 
        &- \int_{-1}^1 (r+2)^2 + 100 (s+4)^2 \, \de\xi\\\nonumber
        &+
        2\alpha \int_{-1}^1 D |r_\xi|^2 + 100(4r +2r^2 + 3r^3 + r^4) + 100 D |s_\xi|^2 - 201 s - 4
        \,\de\xi
    \end{align}
    holds for all functions $(r,s)\in\mathcal{X}$. Now, since $r(-1)=-2$ we can use the Cauchy--Schwarz inequality to estimate
    \begin{equation*}
        |r(\xi)+2| 
        = \left|\int_{-1}^\xi r_\xi(z) \,\de z\right| 
        \leq \sqrt{\xi+1}  \left(\int_{-1}^\xi |r_\xi(z)|^2 \,\de z \right)^{\frac{1}{2}} 
        \leq \sqrt{\xi+1} \left( \int_{-1}^1 |r_\xi|^2 \,\de\xi\right)^{\frac{1}{2}},
    \end{equation*}
    so $\int_{-1}^1 (r+2)^2 \,\de\xi \leq 2 \int_{-1}^1 |r_\xi|^2 \,\de\xi$. Similarly, $\int_{-1}^1 (s+4)^2 \,\de\xi \leq 2 \int_{-1}^1 |s_\xi|^2 \,\de\xi$.
    Then, inequality \cref{e:lyap-pde-explicit} holds if we can find $\alpha>0$ and $b\in\R$ such that
    \begin{equation*}
        \int_{-1}^1 
                (\alpha D-1) \left( |r_\xi|^2 + 100|s_\xi|^2 \right) + 100\alpha (4r +2r^2 + 3r^3 + r^4)
               - 201\alpha s - 4\alpha
            \,\de \xi
            \geq -\frac{b}{2}
    \end{equation*}
    for all functions $(r,s)\in \mathcal{X}$. This is possible for any $\alpha > 1/D$ because, for such a choice, the left-hand side of the last inequality attains a finite minimum on the function space $\mathcal{X}$ (this is a standard result in the calculus of variations; see, e.g., \cite{rindler2018}).
\end{proof}

\end{document}